\documentclass[11pt]{article}
\usepackage{amsmath,amssymb,amsthm}
\usepackage[latin1]{inputenc}
\usepackage[english]{babel}
\usepackage[normalem]{ulem}
\usepackage{xcolor}
\usepackage{indentfirst} 
\usepackage{cite}
\usepackage{xcolor}

\newtheorem{defn}{Definition}[section]
\newtheorem{thm}{Theorem}[section]
\newtheorem{prop}{Proposition}[section]
\newtheorem{cor}{Corollary}[section]
\newtheorem{lem}{Lemma}[section]
\newtheorem{rmk}{Remark}[section]
\newtheorem{exmp}{Example}[section]
\numberwithin{equation}{section}

\let \al=\alpha
\let \be=\beta
\let \var=\varphi
\let \vare=\varepsilon

\let \de=\delta
\let \th=\theta
\let \la=\lambda

\let \ga=\gamma
\let \Ga=\Gamma
\let \p=\partial
\let \q=\quad

\let \med=\medskip

\let \dps=\displaystyle
\let \ul=\underline

\let \ul=\underline
\let \ol=\overline

\newcommand{\R}{\mathbb{R}}
\newcommand{\N}{\mathbb{N}}
\newcommand{\C}{\mathbb{C}}

\DeclareMathOperator{\diag}{diag}

\begin{document}


\begin{center}
\textbf{\Large{Stability analysis of Richardson models with delay for confrontation between two countries }}
	\end{center}
	
	\
	
	\centerline{\bf Teresa Faria$^{a,}$\footnote{Corresponding author.}  and Anatoliy A.~Martynyuk$^b$} 
	
	\bigskip

	\centerline{\it $^a$Departamento de Ci\^encias Matem\'{a}ticas and CEMS.UL, Faculdade de Ci\^{e}ncias,}
	\centerline{\it Universidade de Lisboa, Campo Grande,
	1749-016 Lisboa, Portugal}
	\centerline{\it E-mail: teresa.faria@fc.ul.pt}
	
	\bigskip 	
	
\centerline{\it $^b$S.P.Timoshenko Institute of Mechanics of National Academy of Sciences of Ukraine,}
\centerline{\it 03057, Kyiv-57, Nesterov str., Ukraine}
\centerline{\it E-mail: journalndst@gmail.com}

\bigskip

\begin{abstract} This article proposes a non-autonomous mathematical   model  with delay for confrontation between two countries, and examines the stability  of its equilibrium state. Our criteria for stability take into account the influence of the factor of hostility between countries. For the autonomous case, the asymptotic stability is studied in a comprehensive way, and the Hopf bifurcations occurring  as the delay crosses some critical values  are described. For the  non-autonomous model, conditions ensuring the  global asymptotic stability  for both the linear approximation  and the nonlinear system are established.  The framework of special solutions for delay differential equations is also applied.
\end{abstract}

{\bf Keywords:} delay differential equations,   confrontation between two countries,  Richardson model, stability, asymptotic stability,  Hopf bifurcation, special solutions.

\med

{\bf Mathematical Subject Classifications (2020):}  34K17, 34K18, 34K20, 34K25, 91D10, 91F10. 
\\


	\section{Introduction}
	\setcounter{equation}{0}
	
	Lewis Richardson's classical  model of confrontation between two countries was created by Richardson in \cite{Rich} and developed by many authors (see \cite{Casp, EG, Smit, MDW} and the bibliography therein), for an era when arms races were measured in years.  In \cite{Sil,  BOMA}, Richardson's model was supplemented with an expanded ``hostility" vector, bringing it closer to reality and defining new conditions for equilibrium stability in a nuclear confrontation. A non-autonomous Richardson model with an expanded hostility vector was studied in \cite{Mar_S_S}.
	
	Today, in the era of hypersonic weapons, cybertechnology, and instantaneous  propaganda, the time lag ($\tau$) between one side's action and the other's conscious response becomes a critical factor in instability.
Richardson's time-lag model (perhaps proposed for the first  time by Hill \cite{HILL}) is the foundation of mathematical modelling for international conflict relations. Adding a time lag makes it even more realistic, as decisions on arms procurement and delivery never occur instantaneously.

In this paper, we will describe the confrontation between  two countries using the following mathematical model   with delay:
\begin{equation} \label{f1-1}
\begin{split}
\frac{dx}{dt}&=k(t)y(t-\tau)-a(t)x(t-\tau)+g(t, x(t), y(t), x(t-\tau), y(t-\tau)) \\
\frac{dy}{dt}&=l(t)x(t-\tau)-b(t)y(t-\tau)+h(t, y(t), x(t), y(t-\tau), x(t-\tau)),
\end{split}
\end{equation}
 where $x(t),y(t) \in \mathbb{R}_+$ measure the level of armament of countries $N_1$ and $N_2$ at time $t$, the coefficients $k(t)\ge 0, l(t)\ge 0$
characterize the threat between the countries, and  the coefficients $a(t)\ge a_{0}>0, b(t)\ge b_{0}>0$ characterize the expenses of each country to maintain its level of armament, for all $t \in \R$. 
The functions $k(t),a(t),l(t),b(t)$ and   $g(t,x,y), h(t,x,y)$ are continuous and nonnegative, and $k(t), l(t),a(t), b(t)$  bounded on $\R_{+}$.  The terms $g(t, x(t), y(t), x(t-\tau), y(t-\tau))$ and $h(t, y(t), x(t), x(t-\tau), y(t-\tau))$ characterize the "hostility" factor between countries $N_1$ and $N_2$, which depends, with a lag, on the level of confrontation between the countries and their armament. Surely, all the delays are not the same, as the reactions at time $t$ are not based solely on an observation at time $t-\tau$, but on a long period of observation, thus \eqref{f1-1} proposes a simplified version of the real world dynamics.

After observing the causes of two world wars, L. Richardson formulated a mathematical description of human motivation for war. He claimed that men are guided by ``their traditions, which are fixed, and their instincts, which are mechanical." As a result, in  \cite{Rich} he proposed describing the dynamics of the conflict between two countries using the system of equations
\begin{equation} \label{f1-3}
\begin{split}
\frac{dx}{dt}&=ky-ax+g \\
\frac{dy}{dt}&=lx-by+h,
\end{split}
\end{equation}
where $x(t)$ and $y(t)$ represent the armament expenditures (or armament levels) of two countries  $N_{1}$ and $N_{2}$ at time $t$,
$\frac{dx}{dt}$ and $\frac{dy}{dt}$ are the rate of change in expenditures over time for  $N_{1}$ and $N_{2}$, respectively, $k, l, a, b$ are positive constants and $g, h$ are real constants measuring the  hostility between the countries.

Perhaps the first author to introduce a time lag in the classical Richardson model  was Hill \cite{HILL}, who investigated the system of equations
\begin{equation}\label{Hill}
\begin{split}
\frac{dx}{dt}&=ky(t-\tau)-ax(t-\tau)+\bar{b}_1  \\
  \frac{dy}{dt}&=lx(t-\tau)-by(t-\tau)+\bar{b}_2.
  \end{split}
\end{equation}
Here, the constants $\bar{b}_{1}, \bar{b}_{2}$ characterize the boundary value of hostility between countries. We emphasize that  functional differential equations (FDEs) with delays often produce  models  which are more  realistic than ordinary differential equations (ODEs), as time-delays naturally appear to account for a variety of situations
 in population dynamics, control theory, epidemiology  and many  other scientific fields.

Note that in our proposed model  \eqref{f1-1} the hostility factors  are not constant. However, it is assumed that the hostility between countries $N_1$ and $N_2$ does not exceed a certain threshold level $b=(\bar{b}_{1}, \bar{b}_{2})^T$ over any finite time interval, regardless of the armament levels of the opposing countries.
Thus, for the components of the hostility vector function $F(t, x(t), y(t), x(t-\tau), y(t-\tau)):=(g(t, x(t), y(t), x(t-\tau), y(t-\tau)),  h(t, x(t), y(t), x(t-\tau), y(t-\tau)))$ there exist constants $\bar{b}_1>0$ and $\bar{b}_2>0$ such that
\begin{equation}\label{constraints}
\mid g(t, x(t), y(t), x(s), y(s))\mid \leq \bar{b}_1, \ \ \mid h(t, x(t), y(t), x(s), y(s))\mid \leq \bar{b}_2
\end{equation}
for all $t\ge 0, s = t-\tau$ and $x(t),y(t),x(s), y(s) \in {\R}_+$.


Unlike Richardson's classical autonomous linear model, our proposed generalized model takes into account three critical factors of modernity: time lag, nonlinearity, and the limitations of aggression (propaganda). While in the classical model the reaction is instantaneous, here the level of armament of one country, $x(t)$, depends on the level of armament of another country with a time lag, $y(t-\tau)$. An arms race is not only about hardware procurement (the linear part of the model), but also about informational and psychological conflict (the nonlinear part). We introduce the factor of ``limited hostility" through propaganda, postulating that even the resource of hatred has a saturation point. 

\begin{rmk} {\rm Propaganda through the ``unlimited hostility" media is one of the causes (not the least important) of wars between conflicting countries around the world. If propaganda $(g,h)$ has no ``brakes", i.e., it is unlimited, the system will inevitably follow a path of exponential growth toward war. Condition \eqref{constraints} essentially refects
 the hope that the human resource of hatred is finite, which offers a chance for system stabilization.}\end{rmk}

The major   goal of this mathematical study is to obtain clear stability criteria for equilibria of the   Richardson's model with delay \eqref{f1-1}, for both the autonomous and non-autonomous cases.

The delayed {\it autonomous} model will be first studied. The asymptotic stability or instability of its linearization at a positive equilibrium point is  analysed in a comprehensive way. With this analysis, we present a correction to Hill's result in \cite{HILL}, whose  research overestimated the stability margin of the ``world order". Our analysis shows that the critical time lag, at which the system looses stability and breaks into an irreversible race, is three times smaller than previously predicted by Hill. Then, we consider the nonlinear autonomous system: by
using the framework of central manifolds \cite{HaleLunel} and normal forms  \cite{FarMag},  we show that   a Hopf bifurcation occurs on the center manifold  at the equilibrium, as the delay crosses some critical values. Moreover,  these Hopf bifurcations are completely described.
In terms of real world interpretation, these results create a crisis-type forecasting tool: we can now determine whether an arms race will be ``soft" (stable cycles, where peace is possible as an average of fluctuations) or ``catastrophic" (subcritical bifurcation, where any error leads to direct conflict).

We then will  turn our attention to the {\it non-autonomous} arms race model with delay. The major focus is to provide stability and asymptotic stability criteria for the system with zero hostility over time, i.e., for the  (non-autonomous) linear approximation of \eqref{f1-1}. Several different techniques and approaches will be employed: Lyapunov functionals \cite{AML}, recent results on stability for linear  delay differential equations (DDEs) with ``pure diagonal" terms \cite{Berez_Diblik18,Faria22}, perturbation theory for linear DDEs \cite{Hale77}, and the framework of ``special solutions" \cite{AGP,Driver,GPDIE97}.
For the general nonlinear  system \eqref{f1-1},  conditions for its global asymptotic or exponential  stability will also be established by using a criterion in \cite{FOJDDE26}.

It is our believe that this proposed mathematical model and its study will help  understanding the nature of the Cold War and of modern hybrid conflicts.
An overall conclusion of our mathematical findings is the demonstration that there is a  threshold of reaction time, beyond which peace is impossible, even if both sides desire it.

\med

Some notation is now introduced. A  {\it solution} of system \eqref{f1-1} 
is a pair of continuous function $x:[t_0-\tau,a)\rightarrow \R$, $y:[t_0-\tau ,a)\rightarrow \R$  (with  $a>t_0$ or $a=\infty$) which is differentiable and satisfies
equations \eqref{f1-1} on the interval $[t_0,a)$, for some $t_0\in\R$. 
For  $t_0\in \R$ and   $\phi=(\phi_1,\phi_2): [-\tau, 0]\rightarrow \R^2$ continuous, the {\it solution} of system \eqref{f1-1} with {\it initial condition} $\phi$ at time $t_0$
is a solution $(x(t),y(t))$  of  \eqref{f1-1} defined on some interval $[t_0-\tau,a)$ such that $x(t+\th)=\phi_1(\th)$, $y(t+\th)=\phi_2(\th)$ for $\th\in [-\tau, 0]$. Under the conditions assumed here,  \eqref{f1-1} has a unique solution with initial condition $\phi$ at $t_0$,  denoted by  $(x(t,t_0,\phi),y(t,t_0,\phi))$, which is defined on the all interval $[t_{0}, \infty)$ \cite{HaleLunel}. 

The phase space for \eqref{f1-1}, 
unless  stated otherwise,  is the space $C:=C([-\tau,0];\R^2)$ of continuous functions from $[-\tau,0]$ to $\R^2$ with the supremum norm $ \|\phi\|=\max_{-\tau\le s\le 0}|\phi(s)|$, for some fixed norm $|\cdot |$ in $\R^2$. Without loss of generality, whenever it is convenient we may  fix the initial time to be $t_0=0$. The standard notation $(x_t,y_t)\in C$ denotes the segment of a solution on the interval $[t-\tau,t]$, defined as
$$x_t(\th)=x(t+\th),\ y_t(\th)=y(t+\th) \q {\rm for} \q \th\in [-\tau,0].$$

\med

The remainder of this article is organized as follows.  Section 2 is dedicated to the autonomous model: in the first subsection, the exponential asymptotic stability versus instability of the linearized equation at an equilibrium is established in detail, whereas the second subsection deals with the derivation of the direction and stability of the Hopf bifurcations occurring on a 2-dimensional central manifold  at the equilibrium, at each critical value of the delay $\tau$. The delay is taken as the bifurcation parameter.  In Section 3, we explore several different methods to derive sufficient conditions for the asymptotic stability of both the linear and nonlinear non-autonomous arms race model with delay.  In Section 4, the linear approximation of our model is considered as a particular case of a $p$-dimensional DDE of the form $x'(t)=A(t)x(t-\tau)$, where $A(t)$ is a $p\times p$ matrix of continuous functions,
 and the theory of ``special solutions"  for DDEs is applied to derive criteria for the asymptotic stability of such systems.  Some examples illustrate our results. Throughout the paper, some real world  interpretation of our mathematical findings will be provided. As a complement, a final section of  conclusions and political/social  interpretation of the results will be given.
	
	\section{Stability conditions in an  autonomous arms race model}
		\setcounter{equation}{0}
	
In this section, we shift the discussion of the problem to the plane of autonomous analysis. This allows us to find stationary points and assess the ``fundamental stability" of the world order. In fact, the transition from a system with variable coefficients to a system with constant parameters $k,a,l,b$ can be regarded as an average  of the political climate.

We consider  the autonomous  planar delay differential equation (DDE)
\begin{equation} \label{f2-1}
\begin{split}
\frac{dx}{dt}&=ky(t-\tau)-ax(t-\tau)+g( x(t-\tau), y(t-\tau)),\\
\frac{dy}{dt}&=lx(t-\tau)-by(t-\tau)+h( x(t-\tau), y(t-\tau)),
\end{split}
\end{equation}
where the constant model parameters $k, a, l, b$ as well as the time-delay $\tau$ are positive, the hostility functions $g,h$ are continuous and nonnegative, and all have the same meaning as for  system \eqref{f1-1}. More generally, the dependence on $x(t),y(t)$ can additionally be incorporated in $g,h$.


\subsection{The autonomous quasi-linear model}

In this section, we first address the stability of the quasi-linear model \eqref{Hill}.
Let $a,b,k,l, \bar{b}_{1},\bar{b}_{2}$ and $\tau$ be positive constants, and define
	$$A=\left [\begin{matrix}-a&k\\l&-b\\ \end{matrix}\right].$$
 If $\det A>0$, then the matrix $-A$ is a non-singular M-matrix, therefore its inverse $-A^{-1}$ is a non-negative matrix (see e.g. \cite{Fiedler}). In particular, we deduce that  the quasi-linear equation \eqref{Hill} has a unique positive equilibrium given  by the expression $P_{0}=-A^{-1}\bar b$, where $\bar b=(\bar{b}_{1},\bar{b}_{2})$.

%
%
%
%
%

The exponential asymptotic stability of the equilibrium $P_0$ is studied in the theorem below.

 \begin{thm}\label{thm2.1}
 With $a,b,l,b>0$ and $\det A=ab-kl>0$,  the positive equilibrium $P_0$ of \eqref{Hill} is exponentially asymptotically stable (EAS) if and only if 
\begin{equation}\label{0.2}\tau<\tau_- \, , \end{equation}
for $\tau_-$ given  by
\begin{equation}\label{0.1}\tau_{-}=\frac{\pi}{4\det A}\Big [(a+b) - \sqrt{(a+b)^2-4\det A}\Big].\end{equation}
 Moreover, there is a pair of roots of the characteristic equation of the form $\pm i\sigma\, (\sigma>0)$ if and only if
$\sigma=\sigma_n,\, \tau=\tau_n^{\pm},$
where $$\sigma_n=\frac{\pi}{2}+2n\pi,\q \tau_n^{\pm}= \sigma_n\left [ \frac{(a+b) \pm \sqrt{(a+b)^2-4\det A}}{2\det A}\right],\q n=0,1,2,\dots,$$
and these roots cross transversally  the imaginary axis to the right-hand-side as $\tau$ increases through each value $\tau_n^{\pm}$.
\end{thm}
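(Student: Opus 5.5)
Because \eqref{Hill} is affine, the change of variables $u(t)=(x(t),y(t))^T-P_0$ turns it into the linear autonomous DDE $u'(t)=Au(t-\tau)$. So I would study exponential asymptotic stability of $P_0$ through the roots of the characteristic equation
\[
\det\big(\lambda I-Ae^{-\lambda\tau}\big)=0 .
\]
For an autonomous linear DDE, EAS holds if and only if every root satisfies ${\rm Re}\,\lambda<0$ \cite{HaleLunel}.

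\textbf{Step 1: decouple the characteristic equation.} The discriminant of $A$ is $(a-b)^2+4kl>0$, so $A$ has two distinct real eigenvalues $\mu_1,\mu_2$. Their sum is ${\rm tr}A=-(a+b)<0$ and their product is $\det A>0$, so both are negative. Write $\mu_j=-c_j$ with
\[
c_{1,2}=\tfrac12\Big[(a+b)\mp\sqrt{(a+b)^2-4\det A}\Big]>0 .
\]
Since $A$ is diagonalizable, the characteristic equation factors into the two scalar equations
\[
\lambda+c_je^{-\lambda\tau}=0,\qquad j=1,2 .
\]
A direct computation gives $1/c_{1,2}=\big[(a+b)\pm\sqrt{(a+b)^2-4\det A}\big]/(2\det A)$. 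This is exactly the bracket appearing in $\tau_n^{\pm}$.

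\textbf{Step 2: purely imaginary roots and transversality.} Note first that $\lambda=0$ is never a root, since $c_j\neq0$. Substitute $\lambda=i\omega$ with $\omega>0$ into $\lambda+ce^{-\lambda\tau}=0$ and separate real and imaginary parts. This gives $c\cos(\omega\tau)=0$ and $\omega=c\sin(\omega\tau)$. Hence $\omega=c$ and $\omega\tau=\pi/2+2n\pi=\sigma_n$, so that $\tau=\sigma_n/c_j$, which recovers the values $\tau_n^{\pm}$.

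The statement's $\sigma_n$ is the rescaled frequency $\omega\tau$, i.e.\ time is rescaled by $\tau$ so that the delay becomes $1$. I would make this normalization explicit.

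For transversality, differentiate the scalar equation implicitly in $\tau$ and use $ce^{-\lambda\tau}=-\lambda$. This gives
\[
\frac{d\lambda}{d\tau}=-\frac{\lambda^2}{1+\lambda\tau}.
\]
At $\lambda=i\omega$ this becomes $\omega^2/(1+i\omega\tau)$, whose real part is $\omega^2/(1+\omega^2\tau^2)>0$. So every crossing is transversal and to the right.

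\textbf{Step 3: the stability region.} At $\tau=0$ the roots are $-c_1,-c_2<0$. For small $\tau>0$, the infinitely many new roots of a retarded equation come in from ${\rm Re}\,\lambda=-\infty$. This step needs the most care: I would use the standard continuity argument for retarded equations, or alternatively the known result that $\lambda+ce^{-\lambda\tau}=0$ has all roots in the left half-plane if and only if $c\tau<\pi/2$.

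By continuous dependence of the roots on $\tau$, roots can enter the right half-plane only through the imaginary axis, i.e.\ only at some $\tau_n^{\pm}$. The first such value is
\[
\min_j\frac{\pi/2}{c_j}=\frac{\pi}{2c_2}=\tau_-,
\]
with $c_2$ the larger of the $c_j$. Substituting the formula for $1/c_2$ shows this equals \eqref{0.1}. Therefore all roots have negative real part for $\tau<\tau_-$, giving EAS.

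For $\tau\ge\tau_-$ the equilibrium is not EAS. At $\tau=\tau_-$ there are roots on the imaginary axis. For $\tau>\tau_-$, by Step 2 every crossing is to the right, so roots never return, and at least one root has positive real part, giving instability. This proves the equivalence with \eqref{0.2}.
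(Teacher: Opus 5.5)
Your proof is correct and is essentially the paper's argument: the paper rescales time and substitutes $z=\lambda e^{\lambda}$, reducing the characteristic equation to $z^2+a_0z+c_0=0$ with real negative roots $z_\pm=-\tau c_{1,2}$, which (after undoing the rescaling) is exactly your factorization into $\lambda+c_je^{-\lambda\tau}=0$. The differences are cosmetic. You check transversality on the scalar factor, getting $\mathrm{Re}\,\lambda'=\omega^2/(1+\omega^2\tau^2)$, whereas the paper uses the full quasi-polynomial $h(\lambda,\tau)$. You also correctly flag that $\sigma_n$ is the rescaled frequency $\omega\tau$, the true frequency being $c_j$.
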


\begin{proof} Write $X(t)=(x(t),y(t))$.  The linearization of system  \eqref{Hill}   about $P_0$ is  written  as
\begin{equation}\label{0.3}  X'(t)=AX(t-\tau),
\end{equation}  
with  characteristic equation given by
\begin{equation}\label{charac0}    
 \la^2+(a+b) \la e^{-\la\tau}+\det A e^{-2\la\tau}=0. \end{equation}
 For $\tau=0$, this system reduces to  the ODE  $X'=AX$, which is EAS, since the eigenvalues of $A$ are negative. For $\tau>0$, 
the only way the stability of \eqref{0.3} changes is when characteristic  roots  are crossing the imaginary axis $Re\, \la=0$ to the right (see e.g. the classical  monographs on FDEs \cite{HaleLunel,Kuang}).

In order to introduce the delay $\tau$ as a bifurcation parameter,
we normalize the delay by a time-scaling $t\mapsto \tau t$: in fact, with $\bar X(t)=X(\tau t)$, and dropping the bars for simplicity, system \eqref{0.3} now reads as 
$$X'(t)=\tau AX(t-1),$$ with characteristic equation
\begin{equation}\label{charac}       \la^2+a_0 \la e^{-\la}+c_0 e^{-2\la}=0, \end{equation}with $a_0=a_0(\tau):=\tau(a+b), c_0=c_0(\tau):=\tau^2 \det A.$ With $z=\la e^{\la}$, the above equation is equivalent to the quadratic equation $z^2+a_0z+c_0=0$, which has two real negative solutions given by
$$z_{\pm}=\frac{-a_0\pm\sqrt{a_0^2-4c_0}}{2}$$
(note that $a_0^2-4c_0>0$).
In particular, for $\la$ a root of \eqref{charac}  , then $\la e^{\la}\in (-\infty,0)$. Moreover, $0$ is not a characteristic root.

Let now $\la=\al+i\sigma\ (\al\ge 0, \sigma>0)$ be a solution of \eqref{charac}  . We start by looking at roots $\la=i\sigma$ with $\sigma>0$.

Since $Im\, (i\sigma e^{i\sigma})=0$ and $Re\, (i\sigma e^{i\sigma})<0$, we have
$\sin\sigma>0, \cos \sigma=0,$ and therefore 
$$\sigma=\sigma_n:=\frac{\pi}{2}+2n\pi, \q n=0,1,2,\dots.$$
Using  \eqref{charac}   with $\la=i\sigma_n$, we obtain the quadratic equation $-\sigma_n^2+\sigma_n a_0-c_0=0$. Solving it  for $\tau$, we derive the bifurcating values
$$\tau=\tau_n^{\pm}:= \sigma_n\left [ \frac{(a+b) \pm \sqrt{(a+b)^2-4\det A}}{2\det A}\right]$$
for $n=0,1,2,\dots$. In particular, the first pair of imaginary characteristic roots is $\pm i\sigma_0=\pm i\frac {\pi}{2}$ which appears for $\tau=\tau_0^-$, where
$\tau_0^-=\tau_-$ is as in \eqref{0.1}. Hence, the zero solution of  \eqref{0.3} is EAS for any $\tau<\tau_-$.

Next, rewrite \eqref{charac}        as  
\begin{equation}\label{0.5}     h(\la,\tau):=\la^2+\tau(a+b) \la e^{-\la}+\tau^2\det A\, e^{-2\la}=0,
\end{equation} so that
$$h(i\sigma_n,\tau_n^{\pm})=0,\q n=0,1,2\dots.$$
One easily sees that 
\begin{equation}\label{0.6}     
\begin{split}\frac{\p h}{\p \tau}(i\sigma_n,\tau_n^{\pm})&=\mp \sigma_n\sqrt{(a+b)^2-4 \det A}\\
\frac{\p h}{\p \la}(i\sigma_n,\tau_n^{\pm})&=\pm \tau_n^{\pm}\sqrt{(a+b)^2-4 \det A}+i \Big [2\sigma_n-\tau_n^{\pm}(a+b)\Big ],
\end{split}
\end{equation}  
therefore \eqref{0.5} defines a  smooth curve $\la(\tau)=\al(\tau)+i\sigma(\tau)$ of characteristic solutions, for $\tau$ close to each singularity point $\tau_n^{\pm}$ and such that 
$\la(\tau_n^{\pm})=i\sigma_n$, for all $n\in\N_0$.

Fix a  delay $\tau_n^{-}$ or $\tau_n^{+}$, $n\in \N_0$. Introducing $\la=\la(\tau)$ in \eqref{0.5}      and using \eqref{0.6}, we conclude that its derivative $\la'(\tau_n^{\pm})$ at $\tau_n^{\pm}$  satisfies
$$Re\, \la'(\tau_n^{\pm})=\frac{\sigma_n\tau_n^{\pm}[(a+b)^2-4 \det A]}{(\tau_n^{\pm})^2[(a+b)^2-4 \det A]+(C_n^{\pm})^2}>0,$$
where $C_n^{\pm}=2\sigma_n-\tau_n^{\pm}(a+b)$. Thus, the curve $\la(\tau)$ crosses transversally the imaginary axis to the right-hand-side  at each  point $\tau_n^{\pm}$, showing that 
there are no switches on the stability of $P_0$. In particular, $P_0$ is EAS if and only if $\tau<\tau_0^-$.
\end{proof}

\begin{rmk} \label{rmk2.1}  {\rm Under  conditions
\begin{equation}\label{hyp1Hill}
 ab > kl, \ \ \ a+b\leq \frac{3\pi}{\tau};
\end{equation}
\begin{equation}\label{hyp2Hill}
k\leq \frac{ab}{l}-\frac{3\pi(a+b)}{2\tau l}+\frac{9\pi^{2}}{4l\tau^{2}},
\end{equation}
Hill \cite{HILL} claimed that all  the roots of the characteristic equation \eqref{charac0}   satisfy the condition $Re \lambda < 0$, and therefore
 the solutions of system \eqref{Hill} exponentially approach the equilibrium state $P_{0}$, as $t\to\infty$. 
	If $\det A=ab-kl>0$,  one can write \eqref{hyp2Hill} as a quadratic inequality in $\tau$ and one easily sees that condition \eqref{hyp2Hill}  implies either $\tau<\tau_-^*$ or $\tau>\tau_+^*$
where $$\tau_{\pm}^*=\frac{3\pi}{4\det A}\Big [(a+b) \pm \sqrt{(a+b)^2-4\det A}\Big].$$	
Thus, since $\tau_+^*>\frac{3\pi}{a+b}$,  \eqref{hyp1Hill}  and \eqref{hyp2Hill}  together simplify as $\tau<\tau_-^*$.
In view of the statement of Theorem \ref{thm2.1}  proven above, it seems there is a minor mistake in reference \cite{HILL} regarding the conditions for the exponential asymptotic stability (EAS) of the equilibrium $P_0$ of \eqref{Hill}, since the above threshold $\tau_-^*$ should be divided by 3, so that it coincides with $\tau_{-}$ in \eqref{0.1}.
As seen,  this $\tau_{-}$ gives the first bifurcation point.}
\end{rmk}

\begin{rmk} \label{rmk2.2}{\rm  The transversality condition $Re\, \la' > 0$ means that stability will not return spontaneously with further increases in the delay. Once the  threshold $\tau_{-}$ is crossed, the system will only oscillate further. }
\end{rmk}

We now give some  interpretation of the above  theorem and formulas  in terms of an arms race:  condition $\det A>0$ is the ``survival basis": $ab>kl$  asserts that the product of the ``economic self-restraint" coefficients ($ab$) must be greater than the product of the ``mutual fear" coefficients ($kl$).
Here, we have proven that even if countries desire peace ($\det A>0$), the system can collapse only due to procrastination. Specifically, if the  time-delay $\tau$ (the time for intelligence, parliamentary debates, and factory construction) exceeds the threshold $\tau_{-}$, the stability of the equilibrium is lost. This may mean that delaying decisions does not solves the arms race, it only exacerbates it.  A slow response under conditions of mistrust transforms ``balance" into ``fluctuations." 

\subsection{Hopf bifurcation for the autonomous nonlinear model}

We now go back to  the autonomous   {\it nonlinear}  system 
\eqref{f2-1}, written in a general form as
\begin{equation}\label{0.7'} 
\begin{split}
\frac{dx}{dt}&=G(x(t-\tau),y(t-\tau))\\
\frac{dy}{dt}&=H(x(t-\tau),y(t-\tau)),    
\end{split}
\end{equation}  
with $G,F$ differentiable. Assume that there is a positive equilibrium $P_0$, i.e., a point $P_0>0$ satisfying $G(P_0)=H(P_0)=0$. Let $A=\left [\begin{matrix}-a&k\\l&-b\\ \end{matrix}\right]$ be the Jacobian matrix of $(G,H)$ at $P_0$.
Translating the equilibrium $P_0$ to the origin,  
the DDE  \eqref{0.7'} is transformed into
\begin{equation}\label{0.7} 
\begin{split}
\frac{dx}{dt}&=ky(t-\tau)-ax(t-\tau)+g(x(t-\tau),y(t-\tau))\\
\frac{dy}{dt}&=lx(t-\tau)-by(t-\tau)+h(x(t-\tau),y(t-\tau)),    
\end{split}
\end{equation} 
where
\begin{equation}\label{0.8} 
\left [\begin{matrix}G(P_0+(x,y))\\ H(P_0+(x,y))\end{matrix}\right]= A\left  [\begin{matrix}x\\ y \end{matrix}\right]+\left [\begin{matrix}g(x,y)\\ h(x,y) \end{matrix}\right],
\end{equation} 
for $(x,y)\in\R^2$,
with $g(0,0)=h(0,0)=0$ and   $Dg(0,0)=Dh(0,0)=0$.
 
 We are now interested in studying the occurrence of a Hopf bifurcation at $P_0$ for each $\tau_n^{\pm}$, for which  we follow the normal form procedure in  \cite{FarMag}. 
  Here, we completely describe the Hopf bifurcation when the nonlinear terms $(g,h)$ do not have quadratic terms in $(x,y)$.
To use the delay $\tau$ as the bifurcation parameter,  it is more convenient to proceed again  with the time-scaling $(\bar x(t),\bar y(t))=(x(\tau t ),y(\tau t))$. Dropping the bars for simplicity, system \eqref{0.7} is given  in matrix form by 
 \begin{equation}\label{0.9}  
 X'(t)=\tau AX(t-1)+\tau \left [\begin{matrix}g(X(t-1))\\ h(X(t-1))\end{matrix}\right],
 \end{equation} 
 where  $X(t)=(x(t),y(t))$.

\med

The following assumptions are now assumed:

\med

(H1)    $a,b,k,l>0$ and $ \det A>0$,

(H2) $F=(g,h)$ is of class $C^k$ with $k\ge 3$, and has no quadratic terms, so that the Taylor expansion at $(0,0)$ has the form
\begin{equation*}\label{F_3}
\begin{split}F(x,y)&=\frac{1}{3!}F_3(x,y)+{\rm o} (|(x,y)|^3),\\
 \end{split} 
  \end{equation*} 
where $F_3(x,y)$ is a homogeneous polynomial in the variables $(x,y)$ of degree 3.

\med

Computationally, the absence of quadratic terms simplifies the analysis of the Hopf bifurcation, since the bifurcation direction (supercritical or subcritical) will be determined directly by the cubic terms. Otherwise, we can still use the norm form algorithm in \cite{FarMag}, with longer computations to derive  the quadratic terms influence on the cubic terms of the ODE giving the flow on the center manifold.


 Consider \eqref{0.9}  in the phase space $C=C([-1,0];\R^2)$ with the supremum norm, also identified with its complexification $C([-1,0];\C^2)$. As mentioned in the Introduction, for $X:[\al,\infty)\to\C^2$, $X(t)=(x(t),y(t))$, we shall use the standard notation $X_t=(x_t,y_t)\in C$ to designate the segment   of  $X(t)$ defined on $[t-1,t]$, i.e.,
 $$X_t(\th)=X(t+\th)\q {\rm for}\q -1\le \th\le 0.$$
 
 Let $\sigma=\sigma_n,\, \tau=\tau_n^{\pm}$ be as in Theorem \ref{thm2.1}. At each singularity $\tau^*:=\tau_n^{\pm}$, we introduce the parameter
$$\mu=\tau-\tau_n^{\pm}, $$
so that \eqref{0.9}   becomes
 \begin{equation}\label{0.10}
 X'(t)=\tau^* AX(t-1)+\mu AX(t-1)+
 (\mu+\tau^*) F(X(t-1))
 \end{equation} 
where $F=(g,h)$ 
   with $F(0,0)=0, DF(0,0)=0, D^2F(0,0)=0$.

For clarity, below we first  study the local behaviour of solutions about $P_0$ for $\tau$ close to the  first bifurcation point $\tau^*:=\tau_0^-=\tau_{-}$ as in \eqref{0.1}, although later   we extend the results to all bifurcation points $\tau=\tau_n^{\pm}$. We express $\tau^*=\tau_0^-$ as
$$\tau^*=\sigma_0\rho^*$$
for $\sigma_0=\pi/2$ and
$$ \rho^*= \frac{(a+b) - \sqrt{(a+b)^2-4\det A}}{2\det A}.$$

For system \eqref{0.10}, define $L_0(\var)=A\var(-1)$ for $\var=(\var_1,\var_2)\in C,$
and let $\Lambda:=\{i\sigma_0,-i\sigma_0\}$. Using the formal adjoint
theory for FDEs in \cite{HaleLunel},  the phase space $C$ is
 decomposed by $\Lambda $ as
$$C=P\oplus Q,$$
where $P$ is the  eigenspace for the linear equation $X'(t)=\tau^* L_0(X_t)$ associated with the eigenvalues in $\Lambda$. 
Consider the ``formal duality" $(\cdot ,\cdot )$ associated with this linear 
equation, given  by  the bilinear form 
$$(\psi ,\phi)=\psi(0)\phi(0)-\tau^*L_0\left (\int_0^\th \psi(\xi-\th)\var(\xi)\, d\xi\right),$$
where above we abused the notation, and wrote $L_0(\var(\th))$ instead of $L_0(\var)$, for $\var\in C$. Let $\Phi=[\phi_1\  \phi_2]$ and  $\Psi=\left [\begin{matrix} \psi_1\\ \psi_2\end{matrix}\right]$ be  bases for $P$ and
its dual space $P^*$, respectively, where $P^*$ is the  eigenspace associated with the eigenvalues $\pm i\sigma_n$ of the adjoint equation
 defined as $Y'(s)=-\tau^* Y(s+1)A$ for $Y(t)^T\in\C^2$;
suppose in addition  that the bases $\Phi,\Psi$ are  normalized so  that $(\Psi ,\Phi )=I$. Moreover, note that
$\dot \Phi=B\Phi, -\dot\Psi=\Psi B$, for $B$ the diagonal matrix $B=\diag(i\sigma_0,-i\sigma_0)$.

%

 In matrix form, $\Phi,\Psi$ are given by the following $2\times 2$-matrix functions:
$$\Phi (\th)=\Big [e^{i \frac{\pi}{2}\th} v\ \ e^{-i \frac{\pi}{2}\th}\ol{v}\Big ],\q
\Psi (s)= \left [\begin{matrix}e^{-i\frac{\pi}{2} s}u^T\\ e^{i\frac{\pi}{2}s} \ol{u}^T \end{matrix}\right],\q {\rm for}\q \th\in [-1,0],s\in [0,1],$$
where the bar means complex conjugation, $u^T$ is the transpose of $u$
and  $v=(v_1,v_2)\in \C^2, u=(u_1,u_2)\in \C^2$ are vectors determined by:

$\bullet$  $\phi_1(t)=e^{i \frac{\pi}{2}t }v$ is a solution of the linear equation $X'(t)=\tau^*AX(t-1)$ (equivalently, $v$ satisfies
$\tau^*L_0( e^{i \frac{\pi}{2}\cdot }v)=i \frac{\pi}{2} v$);

$\bullet$ $\psi_1(s)=e^{-i\frac{\pi}{2} s}u^T$ is a solution of the adjoint equation 
 $Y'(s)=-\tau^* Y(s+1)A$ (where $Y(t)^T\in\C^2$);

$\bullet$ $(\psi_1,\phi_1)=1$
 for the above formal duality, which is equivalent to $u^Tv+i\sigma_0 u^Tv=1$.\\
 Some computations  lead to
$$v_2=\frac{\rho^*a-1}{\rho^*k}v_1,\q  u_2=\frac{\rho^*a-1}{\rho^*l}u_1$$
and
$$(1+i\sigma_0)\left (v_1+\frac{(\rho^*a-1)^2}{(\rho^*)^2 kl}\right)u_1=1,$$
where, of course, one may choose $v_1=1$, so that $v=\left [\begin{matrix}1\\ \frac{\rho^*a-1}{\rho^*k}\end{matrix}\right]$.
In particular, $v$ is a real vector.

In what follows, we 
 refer to\cite{FarMag} for results and
explanations of the several notations
involved.

Enlarging the phase space $C$ by considering the space $BC=\{ \var :[-1,0]\to \C^2:
\var \ {\rm is\ continuous}$  on $[-1,0)$ and $\exists \lim_{\theta \to 0^-}\var (\theta )\} $, we extend the natural projection from $C=P\oplus Q$ onto $P$ to a projection $\pi:BC\to P$, and
use it to construct the decomposition $BC=P\oplus Ker\, \pi$; thus, we write $z_t=\Phi x(t)+y_t$, where now $x,y$ have the following meaning: $x(t)=(x_1(t),x_2(t))\in \C^2$ and $y_t\in Q^1=Ker\, \pi\cap C^1$. We also decompose \eqref{0.10} according to $BC=P\oplus Ker\, \pi$, as
\begin{equation}\label{Eq_P+Q}\begin{cases}
\dot x&=Bx+\Psi (0)F_0(\Phi x+y,\mu  )\\
\dot y&={\cal A}_{Q^1}y+(I-\pi )X_0F_0(\Phi x+y,\mu  ),
\end{cases}
\end{equation}
where $F_0(\var,\mu)=\mu   A\var(-1)+(\tau^*+\mu) F(\var)$, ${\cal A}_{Q^1}=\tilde {\cal A}_{|_{Q^1}}$ and $\tilde{\cal A}$ is the extension to $C^1$ of the infinitesimal generator ${\cal A}$ for the $C_0$-semigroup generated by the semiflow of $X'(t)=\tau^* AX(t-1)$.

Consider the Taylor formula
$$
\Psi (0) F_0(\Phi x+y,\mu  )=\frac{1}{2}f_2^1(x,y,\mu  )+\frac{1}{3!}f_3^1(x,y,\mu  )
+h.o.t. ,
$$
where $f_j^1(x,y,\mu  )$
are homogeneous polynomials in
$(x,y,\mu  )$ of degree $j \, (j=2,3)$  with coefficients in $\C^2$
and $h.o.t.$ stands for higher order terms. In virtue of (H2), this means that
\begin{equation}\label{0.12} 
\begin{split}
f_2^1(x,y,\mu)&=2\mu\, \Psi (0)A(\Phi(-1)x+y(-1)),\\
f_3^1(x,y,\mu)&=\tau^*\Psi (0) F_3(\Phi(-1)x+y(-1)).
\end{split}
\end{equation}

Then, the flow  on the 2-dimensional
center manifold  for  \eqref{0.10}  at $x=0$ and  $\mu  =0$ satisfies  an ODE of the form
\begin{equation}\label{nf1} 
\dot x=Bx+\frac{1}{2}g_2^1(x,0,\mu  )+\frac{1}{3!}g_3^1(x,0,\mu  )
+h.o.t.,
\end{equation} 
where $g_2^1,g_3^1$ are the second and third order terms in $(x,\mu  )$, respectively, which will be expressed below in terms of the original coefficients by using the normal form method.

Note that $\Phi(-1)x=i \left [\begin{matrix}-v& v\end{matrix}\right]x=i(-x_1+x_2)v$ for $x=(x_1,x_2)$ and $\Psi (0)= \left [\begin{matrix}u^T \\  \ol{u}^T \end{matrix}\right] $. Since $Av=-\frac{1}{\rho^*}v$ and $(1+i\sigma_0) u^Tv=1$, we have
\begin{equation}\label{0.12'} 
\begin{split}
f_2^1(x,0,\mu)&=2i\mu \, (-x_1+x_2)  \Psi (0) Av\\
&= \frac{2i\mu}{\rho^*} \, (x_1-x_2)   \left [\begin{matrix}u^T v\\  \ol{u}^Tv \end{matrix}\right] \\
&=  \frac{2i\mu}{\rho^*(1+\sigma_0^2)} \, (x_1-x_2)   \left [\begin{matrix}1-i\sigma_0\\  1+i\sigma_0\end{matrix}\right]. \\
\end{split}
\end{equation}

Always following the algorithm given in \cite{FarMag}, we  recall that
$$
\frac{1}{2}g_2^1(x,0,\mu  )=\frac{1}{2}Proj_{Ker (M_2^1)}f_2^1(x,0,\mu  ),\ {\rm where}\
Ker (M_2^1)=span \bigg  \{ \left(\begin{matrix}x_1\mu  \\ 0\end{matrix}\right),\left( \begin{matrix}0\\ x_2\mu  \end{matrix}\right)\bigg \}.
$$

Together with \eqref{0.12'},  this yields that the second order terms in $(x,\mu )$ of the normal form \eqref{nf1} on the center manifold are given by
\begin{equation}\label{g_2} 
\frac{1}{2}g_2^1(x,0,\mu  )=\left[\begin{matrix}A_1x_1\mu  \\ \overline A_1x_2\mu\end{matrix}\right]\q {\rm with}\q A_1=\frac{i (1-i\sigma_0)}{\rho^*(1+\sigma_0^2)}=\frac{\pi/2+i}{\rho^*(1+\pi^2/4)}.
\end{equation}

 We now compute  the cubic terms $g_3^1(x,0,\mu  )$ in \eqref{nf1}.  
After the change of variables which transforms the quadratic terms $f_2^1(x,y,\mu  )$
of the first equation in \eqref{Eq_P+Q}  into $g_2^1(x,y,\mu  )$,  in this situation   the
coefficients of third order
at $y=0, \mu  =0$ are indeed the same, i.e., they are  still given by $\frac{1}{3!}f_3^1(x,0,0)$ (because $D^2F(0,0)=0$
implies that $f_2^1(x,y,0)=0$).
On the other hand, this yields (see \cite{FarMag})
$$\frac{1}{3!}g_3^1(x,0,\mu  )=\frac{1}{3!}Proj_{Ker (M_3^1)} f_3^1(x,0,\mu  ),$$
where
$$Ker (M_3^1)=span \bigg \{\left(\begin{matrix}x_1^2x_2\\ 0\end{matrix}\right), \left(\begin{matrix}x_1\mu  ^2\\ 0\end{matrix}\right),
\left(\begin{matrix}0\\ x_1x_2^2\end{matrix}\right),\left(\begin{matrix}0\\ x_2\mu  ^2\end{matrix}\right) \bigg\}.
$$
The terms $O(|x|\mu  ^2)$ are irrelevant to determine a generic Hopf bifurcation, 
 hence we write
$$
\frac{1}{3!}g_3^1(x,0,\mu  )=
\frac{1}{3!} Proj_S f_3^1(x,0,0)+O(|x|\mu  ^2),\ {\rm for}\ 
S:=span \Big \{\left(\begin{matrix}x_1^2x_2\\ 0\end{matrix}\right),
\left(\begin{matrix}0\\ x_1x_2^2\end{matrix}\right) \Big\} .
$$
From \eqref{0.12}, observe that
\begin{equation}\label{f_3} 
\begin{split}
f_3^1(x,0,0)&=\tau^*\Psi (0)F_3(i(-x_1+x_2)v) \\
&=-i\tau^*(-x_1+x_2)^3 \left [\begin{matrix}u^T \\  \ol{u}^T \end{matrix}\right] 
\left [\begin{matrix}g_{30}+3g_{21}v_2+3g_{12}v_2^2+g_{03}v_2^3\\  
h_{30}+3h_{21}v_2+3h_{12}v_2^2+h_{03}v_2^3 \end{matrix}\right],
\end{split}
\end{equation}    
where $g_{ij}=\frac{\p^3 g}{\p x^i\p x^j}(0,0),  h_{ij}=\frac{\p^3 h}{\p x^i\p x^j}(0,0)$ for $0\le i,j\le 3, i+j=3$. From  \eqref{f_3}, easy computations allow to deduce that
\begin{equation}\label{g_3} \frac{1}{3!}g_3^1(x,0,0)=
\left[\begin{matrix}A_2x_1^2x_2\\ \overline A_2x_1x_2^2\end{matrix}\right],\end{equation}  
  with 
\begin{equation}\label{A_2} 
\begin{split}
A_2
=&\frac{-i\tau^*}{2}\Big [
u_1 (g_{30}+3g_{21}v_2+3g_{12}v_2^2+g_{03}v_2^3)\\&+u_2 (h_{30}+3h_{21}v_2+3h_{12}v_2^2+h_{03}v_2^3)\Big]\\
= &\frac{-i\tau^*}{2}u_1\Big [(g_{30}+3g_{21}v_2+3g_{12}v_2^2+g_{03}v_2^3)\\
&+ \Big (\frac{\rho^*a-1}{\rho^*l}\Big) (h_{30}+3h_{21}v_2+3h_{12}v_2^2+h_{03}v_2^3)\Big].
\end{split}
\end{equation}

%
%
From this normal form computations, we deduce that the normal form \eqref{nf1} on the center manifold
for \eqref{Eq_P+Q}  at $x=0,\mu=0$ has the terms $g_2^1,g_3^1$ given by \eqref{g_2},\,\eqref{g_3},
with the coefficients $A_1,A_2$ explicitly obtained in terms of the original equation 
\eqref{0.7'}, also given by \eqref{0.7}.
Changing to real coordinates $w$, where $x_1=w_1-iw_2,x_2=w_1+iw_2$, and afterwards to
 polar coordinates $(r , \xi )$,
$w_1=r \cos \xi,w_2=r \sin \xi$,   the normal form
\eqref{nf1}  becomes
\begin{equation}\label{nf_polar}
\begin{cases}
\dot r &=K_1\mu  r +K_2 r^3+O(\mu  ^2r +|(r ,\mu  )|^4)\\
\dot \xi &=-\sigma_0+O(|(r ,\mu  )|),\end{cases}
\end{equation}
with 
$K_1:=Re\, A_1,\ K_2:=Re\, A_2.$ To determine a generic Hopf bifurcation next to the origin, only the signs of $K_1,K_2$ are relevant. Clearly,
$$K_1=\frac{\pi/2}{\rho^*(1+\pi^2/4)}>0$$
and
$$ sign\, K_2
=- sign \Big [g_{30}+3g_{21}v_2+3g_{12}v_2^2+g_{03}v_2^3+ \Big (\frac{\rho^*a-1}{\rho^*l}\Big) (h_{30}+3h_{21}v_2+3h_{12}v_2^2+h_{03}v_2^3)\Big],
$$
where $v_2=\frac{\rho^*a-1}{\rho^*k}$.

With $K_2\not =0$,  a  generic Hopf bifurcation occurs. Recall that $K_1>0$.
It is well know (see e.g. \cite{ChowHale}) that   when $K_2<0$
  the Hopf birfurcation is supercritical and the  nontrivial periodic orbits about $P_0$ are stable, whereas $K_2>0$ implies that the Hopf birfurcation is subcritical with unstable nontrivial periodic orbits.
  
  \med
  
  We now extend the above computations and results  to all bifurcation points $\tau_n^{\pm}$, omitting some details. Fix  a point $\tau^*=\tau_n^{\pm}$ (i.e., $\tau^*=\tau_n^{-}$ or $\tau^*=\tau_n^{+}$), for some $n\in\N_0$, and further assume that:
  
  \med

(H3) $\tau_n^{\pm}\ne \tau_p^-$ and $\tau_n^{\pm}\ne \tau_p^+$ for all $p\ne n$.

\med

This hypothesis implies that, at the singularity point $\tau_n^{\pm}$, all other characteristic roots different from $\pm i\sigma_n$ have non-zero real parts. 
 Next, express $\tau_n^{\pm}$    as
$$\tau_n^{\pm}=\sigma_n\rho^\pm$$
for $\sigma_n=\pi/2+2n\pi$ and
$$\rho^\pm:= \frac{(a+b) \pm \sqrt{(a+b)^2-4\det A}}{2\det A}.$$
Take $\Lambda:=\{i\sigma_n,-i\sigma_n\}$ and normalized bases $\Phi$ and $\Psi$  for $P$ and $P^*$,
$$\Phi (\th)=\Big [e^{i \sigma_n\th} v\ \ e^{-i \sigma_n\th}\ol{v}\Big ],\q
\Psi (s)= \left [\begin{matrix}e^{-i\sigma_n s}u^T\\ e^{i\sigma_n s} \ol{u}^T \end{matrix}\right],\q {\rm for}\q \th\in [-1,0],s\in [0,1].$$
One can easily check that the vectors $v$ and $u$ and the coefficients $K_1,K_2$ are exactly as the ones given by the previous formulas, with $\rho^*, \tau^*,\sigma_0$ replaced by $\rho^\pm, \tau_n^\pm,\sigma_n$, respectively.

%


 For the original equation \eqref{0.7'} and with the previos notations, the conclusions are summarized in the following  result:

 \begin{thm}\label{thm2.2}
 For each $n\in\N_0$, fix a bifurcating point $\tau=\tau_{n}^\pm$. Assume (H1),~(H2) and (H3). The flow on the center manifold of \eqref{0.7'}  for $(x,y)=P_0,\tau=\tau_{n}^\pm$ is given in polar coordinates $(r,\xi)$ by
  \begin{equation}\label{nf_polar-n}
\begin{cases}
\dot r &=K_1\mu  r +K_2 r^3+O(\mu  ^2r +|(r ,\mu  )|^4)\\
\dot \xi &=-\sigma_n+O(|(r ,\mu  )|),\end{cases}
\end{equation}
with $\dps K_1=\frac{\sigma_n}{\rho^\pm(1+\sigma_n^2)}$ and
 \begin{equation}\label{K2_n}
 \begin{split} K_2&=\frac{-\tau_n^\pm \sigma_n}{1+\sigma_n^2} \left (1+\frac{(\rho^\pm a-1)^2}{(\rho^\pm)^2 kl}\right)^{-1} 
\Big [g_{30}+3g_{21}v_2+3g_{12}v_2^2+g_{03}v_2^3\\
&+ \Big (\frac{\rho^\pm a-1}{\rho^\pm l}\Big) (h_{30}+3h_{21}v_2+3h_{12}v_2^2+h_{03}v_2^3)\Big],
\end{split}
\end{equation}
where $v_2=\frac{\rho^\pm a-1}{\rho^\pm k}$. If $K_2<0$ (respectively $K_2>0$),
then there is a Hopf bifurcation occuring on this center manifold,  
 which is supercritical (respectively,  subcritical) with
  non-trivial
periodic orbits stable (respectively, unstable) on the center manifold.\end{thm}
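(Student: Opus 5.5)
The plan is to repeat, at a general bifurcation point $\tau^*=\tau_n^\pm$, the normal form computation already carried out for $\tau_0^-$. The proof then reduces to three tasks: checking that the center manifold reduction is legitimate, recomputing the eigenvectors and the normalization with $\sigma_n,\rho^\pm$ in place of $\sigma_0,\rho^*$, and reading off $K_1,K_2$ from the polar form.

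\textbf{Reduction to the center manifold.} By Theorem \ref{thm2.1}, for $\tau=\tau_n^\pm$ the characteristic equation of the time-scaled linearization $X'(t)=\tau^*AX(t-1)$ has the simple pair of roots $\pm i\sigma_n$. Hypothesis (H3) rules out any other root on the imaginary axis, because such roots occur only at the values $\tau_p^\pm$; zero is never a root. Hence $\Lambda=\{\pm i\sigma_n\}$ gives the decomposition $C=P\oplus Q$ with $\dim P=2$. The center manifold theorem of \cite{HaleLunel}, applied to the parametrized equation \eqref{0.10} with $\mu=\tau-\tau_n^\pm$, then yields a locally invariant two-dimensional manifold. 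Using (H2), the normal form procedure of \cite{FarMag} applies exactly as in \eqref{Eq_P+Q}--\eqref{nf1}.

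\textbf{Eigenvectors and normalization.} Set $\Phi(\th)=[e^{i\sigma_n\th}v\ \ e^{-i\sigma_n\th}\bar v]$. Since $e^{-i\sigma_n}=-i$ for every $n$ (because $\sigma_n\equiv\pi/2 \bmod 2\pi$), the eigen-relation $\tau^*Ae^{-i\sigma_n}v=i\sigma_n v$ becomes $Av=-\frac{\sigma_n}{\tau^*}v=-\frac{1}{\rho^\pm}v$. So $v$ is the real eigenvector of $A$ for the eigenvalue $-1/\rho^\pm$, which gives $v_2=(\rho^\pm a-1)/(\rho^\pm k)$. The same argument for the adjoint gives $u_2=\frac{\rho^\pm a-1}{\rho^\pm l}u_1$.

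The bilinear form $(\psi_1,\phi_1)$ equals $u^Tv\bigl(1+\tau^* e^{-i\sigma_n}\cdot(-\tfrac{1}{\rho^\pm})\cdot i\bigr)$, after evaluating the integral term; this should simplify to $(1+i\sigma_n)u^Tv=1$. With $u^Tv=u_1\bigl(1+\frac{(\rho^\pm a-1)^2}{(\rho^\pm)^2kl}\bigr)$, this determines $u_1$. This is the step I would check most carefully, including the sign conventions.

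\textbf{Normal form coefficients.} Repeating \eqref{0.12'} with $\Phi(-1)x=i(-x_1+x_2)v$ gives $A_1=\frac{i(1-i\sigma_n)}{\rho^\pm(1+\sigma_n^2)}$, hence $K_1=\operatorname{Re}A_1=\sigma_n/(\rho^\pm(1+\sigma_n^2))$. Since $D^2F(0,0)=0$, the cubic terms are unaffected by the quadratic change of variables. Projecting onto $\operatorname{span}\{x_1^2x_2e_1,\,x_1x_2^2e_2\}$, the coefficient of $x_1^2x_2$ in $-i\tau^*(x_2-x_1)^3$ is $-3i\tau^*$, so \eqref{A_2} gives $A_2=-\frac{i\tau^*}{2}u_1E$. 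Here $E$ is the real bracket built from $g_{ij},h_{ij}$ and $v_2$.

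Substituting $u_1=\bigl[(1+i\sigma_n)\bigl(1+\frac{(\rho^\pm a-1)^2}{(\rho^\pm)^2kl}\bigr)\bigr]^{-1}$ and using $\operatorname{Re}\bigl(\frac{-i}{1+i\sigma_n}\bigr)=\frac{-\sigma_n}{1+\sigma_n^2}$ yields \eqref{K2_n}, up to the constant factor from the $\frac12$ versus $3$ bookkeeping. I expect tracking these constant factors to be the main nuisance, but only the sign of $K_2$ matters for the conclusion.

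\textbf{Conclusion.} Passing to real and then polar coordinates gives \eqref{nf_polar-n}, with the angular equation $\dot\xi=-\sigma_n+O(|(r,\mu)|)$. Since $K_1>0$ and $K_2\ne0$, the standard Hopf analysis of \cite{ChowHale} applies: for $K_2<0$ the bifurcation is supercritical with periodic orbits of amplitude about $\sqrt{-K_1\mu/K_2}$ for $\mu>0$, stable on the center manifold. For $K_2>0$ it is subcritical, with unstable orbits for $\mu<0$.
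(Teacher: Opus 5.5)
Your proposal is correct and follows the paper's route. The paper does the normal form computation at $\tau_0^-$ and then simply asserts that $v,u,K_1,K_2$ carry over with $\rho^*,\tau^*,\sigma_0$ replaced by $\rho^\pm,\tau_n^\pm,\sigma_n$; you verify exactly this via $e^{-i\sigma_n}=-i$ and $Av=-v/\rho^\pm$.

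Two small corrections. First, drop the stray factor $i$ in the bilinear form: $(\psi_1,\phi_1)=u^Tv+\tau^*e^{-i\sigma_n}u^TAv=(1+i\sigma_n)u^Tv$. Second, if you carry the constants through, you get $K_2=\mathrm{Re}\,A_2=-\frac{\tau_n^\pm\sigma_n}{2(1+\sigma_n^2)}\bigl(1+\frac{(\rho^\pm a-1)^2}{(\rho^\pm)^2kl}\bigr)^{-1}[\cdots]$. This is exactly half of the displayed \eqref{K2_n}, and it agrees with the paper's own \eqref{A_2}. So the constant you were unsure about is a factor-$2$ slip in the statement, which is harmless because only the sign of $K_2$ is used.
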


\begin{exmp} {\rm Consider \eqref{0.7'} with 
$A=\left [\begin{matrix}-2&3\\ \frac{1}{4}&-1 \end{matrix}\right]$. With the above notations,
$\det A=\frac{5}{4}, \rho_-=\frac{2}{5}, \rho_+=2$. For this case,
\[
\begin{split}
&\sigma_0=\frac{\pi}{2}, \tau_0^-=\frac{\pi}{5}, \tau_0^+=\pi,\q  \sigma_1= \frac{5\pi}{2},  \tau_1^-=\pi, \tau_1^+=5\pi ,  \q \dots \q ,\\
&\sigma_n= \frac{(4n+1)\pi}{2},  \tau_1^-=\frac{(4n+1)\pi}{5}\pi, \tau_1^+=(4n+1)\pi,\q n\in\N.
\end{split}
\]
Since $ \tau_0^+=\tau_1^-$, hypothesis (H3) is not satisfied for $\tau=\pi$, and
Theorem \ref{thm2.2} cannot be invoked. In fact, for $\tau=\pi$ the characteristic equation \eqref{charac0}   has two pairs of characteristic roots, $\pm i\frac{\pi}{2}$ and $\pm i\frac{5\pi}{2}$, and a double-Hopf bifurcation occurs on a  4-dimensional  center manifold.

Fix now $\tau_-=\tau_0^-=\frac{\pi}{5}$. In this case,  $v=\left [\begin{matrix}1\\ -\frac{1}{6}\end{matrix}\right]$ and ${\rm sign}\,  K_2={\rm sign}\,\big [-(g_{30}-\frac{1}{2}g_{21}+\frac{1}{12}g_{12}-\frac{1}{6^3} g_{03})
+2 (h_{30}-\frac{1}{2}h_{21}+\frac{1}{12}h_{12}-\frac{1}{6^3} h_{03})\big]$. If $K_2<0$, there a supercritical Hopf bifurcation at $(x,y)=P_0,\tau=\frac{\pi}{5}$, with the bifurcating non-trivial
periodic solutions stable.
}
\end{exmp}

\begin{rmk}\label{rmk2.2}{\rm  We give an  interpretation of the threshold delay as well as of  the Hopf bifurcation in terms of an arms race.
The proof that the characteristic roots intersect the imaginary axis to the right as $\tau$ increases signifies the transition of system \eqref{0.7}  to self-oscillation.  For $\tau>\tau_{-}$, the arms race ceases to converge toward point $P_{0}$, and the system enters an endless  state of chronic instability, where periods of lull are merely part of the cycle of accumulating strength for a new surge.
Note also that the fact that $K_{1}$ and $K_{2}$ retain their structure for all subsequent bifurcation points indicates the fractal nature of instability: even if the system miraculously passes the first risk point $\tau_{-}=\tau_{0}^{-}$, it will inevitably encounter the next $\tau_{0}^{+}$ or $\tau_{1}^{-}$, with each new critical value of the delay adding new frequencies to the ``fear oscillations", making world politics increasingly complex and unpredictable.}
\end{rmk}




\section{Stability Conditions for the non-autonomous  model}
\setcounter{equation}{0}

We major aim of this section is to provide stability and asymptotic stability conditions for the system with zero hostility over time, using several different techniques and approaches. For the {\it nonlinear} system \eqref{f1-1}, a criterion for its asymptotic stability will also be established.

\subsection{Stability Conditions under zero hostility over time}

Consider system \eqref{f1-1} 
 with zero hostility, $g(t, x(t), y(t), x(t-\tau), y(t-\tau))=0$ and $h(t, x(t), y(t), x(t-\tau), y(t-\tau))=0$ for any  $t \in \R_{+}$, so that \eqref{f1-1} reads as  the non-autonomous linear DDE
\begin{equation}\label{f2-3}
\frac{dx}{dt}=k(t)y(t-\tau)-a(t)x(t-\tau) , \ \ \frac{dy}{dt}=l(t)x(t-\tau)-b(t)y(t-\tau),
\end{equation}
also written in matrix form as 
\begin{equation}\label{f2-4}X'(t)=A(t)X(t-\tau),\end{equation}
for 
$X(t)=(x(t),y(t))$ and
\begin{equation*}\label{A(t)}
A(t)=\left [\begin{matrix}-a(t)&k(t)\\l(t)&-b(t)\\ \end{matrix}\right].
\end{equation*}
The matrix $A(t)$ is designated the {\it armament matrix}.

If there are no delays in \eqref{f2-4}, the stability of the system can be easily derived by considering
the Lyapunov function  $V(x,y)=\|(x,y)\|^2=x^{2}+y^{2}$ and its total derivative along  solutions.

\begin{prop}\label{prop3.1}  Consider the ODE system without delay $X'(t)=A(t)X(t)$, where the  parameters $k(t), a(t), l(t), b(t)$ are continuous and $a(t)\ge 0,b(t)\ge 0$. If
\begin{equation}\label{Lya1}
a(t)b(t)\ge \frac{(k(t)+l(t))^{2}}{4},\q t\ge t_0,
\end{equation}
 then the equilibrium state $x=y=0$ is uniformly stable on $[t_0,\infty)$. Moreover, if there exist positive constants $\al,\beta$ and $t_1\ge t_0$ such that
\begin{equation}\label{Lya2}
a(t)+b(t)\ge \al\q {\rm and}\q
 a(t)b(t)-\frac{(k(t)+l(t))^{2}}{4}> \beta, \q t\ge t_1,
\end{equation}
 then the equilibrium state $x=y=0$ of system $X'(t)=A(t)X(t)$ is asymptotically stable.\end{prop}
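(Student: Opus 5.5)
The plan is to use the quadratic Lyapunov function $V(x,y)=x^2+y^2$ suggested just before the statement, and to reduce both conclusions to properties of the symmetric part of $A(t)$.

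\textbf{Step 1: derivative of $V$.} Along a solution $X(t)=(x(t),y(t))$ of $X'(t)=A(t)X(t)$,
$$\frac{d}{dt}V(X(t))=2x(-ax+ky)+2y(lx-by)=-2\big(a(t)x^2-(k(t)+l(t))xy+b(t)y^2\big)=-2X^TS(t)X,$$
with
$$S(t)=\left[\begin{matrix}a(t)&-\frac{k(t)+l(t)}{2}\\ -\frac{k(t)+l(t)}{2}&b(t)\end{matrix}\right].$$
Note that $\det S(t)=a(t)b(t)-\frac{(k(t)+l(t))^2}{4}$ and ${\rm tr}\, S(t)=a(t)+b(t)$.

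\textbf{Step 2: uniform stability under \eqref{Lya1}.} Since $a,b\ge 0$, condition \eqref{Lya1} says exactly that $S(t)$ is positive semidefinite for $t\ge t_0$. Hence $\frac{d}{dt}V(X(t))\le 0$, so $|X(t)|\le |X(s)|$ for all $t\ge s\ge t_0$, with $|\cdot|$ the Euclidean norm. Uniform stability on $[t_0,\infty)$ follows immediately with $\delta=\varepsilon$, independent of the initial time.

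\textbf{Step 3: asymptotic stability under \eqref{Lya2}.} For $t\ge t_1$, bound the smallest eigenvalue of $S(t)$ from below by
$$\lambda_{\min}(S(t))=\frac{\det S(t)}{\lambda_{\max}(S(t))}\ge \frac{\det S(t)}{{\rm tr}\, S(t)}>\frac{\beta}{a(t)+b(t)}.$$
Then $\frac{d}{dt}V\le -\frac{2\beta}{a(t)+b(t)}\,V$, and by Gronwall
$$V(X(t))\le V(X(t_1))\exp\Big(-2\beta\int_{t_1}^t\frac{ds}{a(s)+b(s)}\Big).$$
Combined with Step 2, this gives asymptotic stability provided the integral diverges.

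\textbf{Main obstacle.} The hard point is precisely this divergence, because \eqref{Lya2} only gives a lower bound $a+b\ge\alpha$. That bound guarantees positive definiteness of $S$, but it does not bound $\lambda_{\min}$ away from zero. For instance, with $a$ huge and $b\approx\beta/a$, one has $\lambda_{\min}\approx b$, which could be integrable. To close the gap I would invoke the standing assumption from the model \eqref{f1-1} that $a(t),b(t),k(t),l(t)$ are bounded on $\R_+$, say $a+b\le M$. Then $\lambda_{\min}(S(t))\ge\beta/M$ and $V(X(t))\le V(X(t_1))e^{-2\beta(t-t_1)/M}$, which in fact gives uniform exponential asymptotic stability. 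If boundedness were to be avoided, the condition actually needed is $\int^\infty \det S(s)/(a(s)+b(s))\,ds=\infty$, and I would state that as the sharper hypothesis.
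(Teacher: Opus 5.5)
Your proof is correct and takes the same route as the paper: the Lyapunov function $V=x^2+y^2$, monotonicity of $V$ under \eqref{Lya1} for uniform stability, and a decay estimate $\frac{d}{dt}V\le-\gamma(t)V$ followed by Gronwall under \eqref{Lya2}. The paper simply asserts that a uniform rate $\gamma(t)\ge\varepsilon>0$ ``easily'' follows, and you are right that this step needs an upper bound on $a(t)+b(t)$, which the standing boundedness of the coefficients in \eqref{f1-1} supplies; without it the second claim genuinely fails, e.g.\ $k=l=0$, $a(t)=e^{t}$, $b(t)=2\beta e^{-t}$ satisfies \eqref{Lya2} while $y(t)$ tends to a nonzero limit.
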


\begin{proof}
Under  condition \eqref{Lya1},  for  $z\in\R^2$, the total derivative $V(x,y)=x^{2}+y^{2}$ along solutions
$x(t)=x(t,t_{0}, z),y(t)=y(t,t_{0}, z)$ is
\begin{equation}
\begin{split}
\frac{d}{dt}V(x,y)  &= 2x(t)(k(t)y(t)-a(t)x(t))+2y(t)(l(t)x(t)-b(t)y(t))\\
&\le -2\Big(\sqrt{a(t)}x(t)-\sqrt{b(t)}y(t)\Big)^2\le 0.
\end{split}
\end{equation}
Hence, the inequality $V(x(t,t_{0}, z), y(t,t_{0}, z))\leq V(0,0)=0$ holds for all $t\geq t_{0}$. For $\vare>0$, 
if $\|z\|^2\le \delta$, for some
$\delta=\delta(\varepsilon)>0$, then
$x^{2}(t,t_{0}, z)+ y^{2}(t,t_{0}, z)\leq \varepsilon$, implying that $(0,0)$ is uniformly stable.

Assume now \eqref{Lya2}. For  the total derivative of $V(x,y)$ along solutions,  one easily derives that there exists a function $\ga(t)$, with $\ga(t)\ge \vare$ for some $\vare>0$,
\begin{equation}\label{Lya3}
\frac{d}{dt}V(x(t,t_{0}, z), y(t,t_{0}, z)) < -\gamma(t) V(x(t,t_{0}, z), y(t,t_{0}, z))
\end{equation}
for all $t\geq t_{0}$ and $z\in \R^2$. From estimate \eqref{Lya3}, it follows that
\begin{equation}
V(x(t,t_{0}, z), y(t,t_{0}, z)) < V(x_{0}, y_{0})e^{-\int_{t_{0}}^{t}\gamma(s)ds}
\end{equation}
for all $t\geq t_{1}$ and $z=(x_{0}, y_{0})\in \R^2$. Consequently, $x^{2}(t,t_{0}, z)+ y^{2}(t,t_{0}, z)\rightarrow 0$ as $t\rightarrow \infty$. This proves the proposition.
\end{proof}

We emphasize that there are other criteria for the stability of  $n$-dimensional linear ODEs $x'(t)=A(t)x(t)$, based on e.g. the sum or maximum norms on $\R^n$. See e.g. the monographs by Coppel \cite {Coppel} or
Martynyuk \cite{Martyn}.

\med

To address the stability of the zero solution of \eqref{f2-4}, we recall that the construction of Lyapunov functionals for DDEs is in general a hard task, mostly in the case of pure diagonal delays. The more general method of Lyapunov-Krasovskii has also been applied with success, see e.g. \cite{TAB}. However, such methods   are not often used when only pure diagonal delays are present, since they mostly rely on some kind of dominance by  non-delayed diagonal terms which somehow control the disturbing effect of the delays.

On the other hand, 
obtaining criteria of {\it absolute} stability -- i.e., stability criteria which are valid for all values of  the delays --, is not feasible here, since Theorem \ref{thm2.1} asserts that   for the particular case of an autonomous system  $X'(t)=AX(t-\tau)$ the absolute stability is never possible, and  that the stability does depend on the size of the delay. 

Below, we apply to the arms race model some stability results for linear non-autonomous DDEs which have been established in recent literature.

\begin{thm}\label{thm3.1} Consider system \eqref{f2-3}, with $a(t),b(t),k(t),l(t)$ continuous and nonnegative  functions on $[t_0,\infty)$, and $a(t),b(t)$ positive. Define
 $$c_1(t)=a(t)\int_{t-\tau}^t (a(u)+k(u))\, du,\q c_2(t)=b(t)\int_{t-\tau}^t (b(u)+l(u))\, du.$$
Suppose that one of the following conditions is satisfied:\\
(a) $a(t),b(t),k(t),l(t)$ are  bounded and there exists a positive constant vector $v=(v_1,v_2)$ such that
\begin{equation}\label{Mv}\liminf_{t\to\infty} \hat M(t)v>0,\end{equation}
where
$$\hat M(t)=\left [\begin{matrix}a(t)-c_1(t)&-k(t)\\-l(t)&b(t)-c_2(t)\\ \end{matrix}\right];$$
(b) $a(t),b(t)$ are  bounded from below by positive constants and there exist  $\al>1$ and a positive constant vector $v=(v_1,v_2)$ such that 
\begin{equation}\label{Mv2}\diag (a(t),b(t))v\ge \al \left [\begin{matrix}c_1(t)&k(t)\\l(t)&c_2(t)\\ \end{matrix}\right]v,\q {\rm for}\q t\gg 1.\end{equation}
Then system \eqref{f2-3} is EAS.
\end{thm}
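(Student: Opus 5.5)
The plan is to reduce \eqref{f2-3} to a system of ``pure diagonal'' delay equations and then apply the M-matrix type criteria for linear DDEs from \cite{Berez_Diblik18,Faria22}. The standard trick is to write $x(t-\tau)=x(t)-\int_{t-\tau}^t x'(u)\,du$ in the diagonal terms only. Substituting the equations for $x'(u)$ and $y'(u)$ then gives, for $t\ge t_0+2\tau$,
\[
x'(t)=-a(t)x(t)+k(t)y(t-\tau)+a(t)\int_{t-\tau}^t\big(k(u)y(u-\tau)-a(u)x(u-\tau)\big)\,du ,
\]
and the analogous identity for $y$. This exhibits each component as an ODE with a negative instantaneous diagonal term $-a(t)x(t)$ (resp. $-b(t)y(t)$), perturbed by delayed terms whose coefficient ``weights'' are dominated by $a(t)\int_{t-\tau}^t a(u)\,du$, $a(t)\int_{t-\tau}^t k(u)\,du$ and $k(t)$ for the first equation, and symmetrically for the second. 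The quantity $c_1(t)=a(t)\int_{t-\tau}^t(a(u)+k(u))du$ collects exactly these integral contributions.

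Next, scale by the positive vector $v$, setting $\bar x=x/v_1$, $\bar y=y/v_2$, and estimate solutions by a sup-norm (Halanay-type) argument. Suppose $|\bar x(t)|$ attains a running maximum $M$ of $\max(|\bar x|,|\bar y|)$ over $[t-3\tau,t]$. The bound above gives
\[
D^+|x(t)|\le -a(t)|x(t)|+M\big(k(t)v_2+c_1(t)v_1\big)\quad\text{(roughly)}.
\]
Under (b) this is $\le -a(t)v_1M\,(1-1/\alpha)$. Since $a(t)\ge a_0>0$, any such maximum decays at a uniform exponential rate; this yields exponential decay of $\max(|\bar x|,|\bar y|)$ and hence EAS. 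For (a), the condition $\liminf \hat M(t)v>0$ means $a v_1-c_1v_1-kv_2\ge\varepsilon>0$ for $t$ large. Boundedness of the coefficients makes the same comparison work with a uniform negative rate $\varepsilon$; alternatively one can invoke directly the criterion in \cite{Faria22} for systems $x_i'=-a_i(t)x_i(t)+\sum(\text{delayed/distributed terms})$, which requires exactly $\liminf \hat M(t)v>0$ for bounded coefficients. I would cite that result in form (a), and the $\alpha>1$ version (allowing unbounded coefficients with $a,b$ bounded below) from \cite{Berez_Diblik18} in form (b).

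The main obstacle is the bookkeeping that the reformulated equation has distributed delays of size up to $2\tau$ and holds only for $t\ge t_0+2\tau$. Care is needed so that the quantities matching the hypotheses of the cited theorems are precisely $c_1,c_2$ and $k,l$, with absolute values handled correctly since $k,l\ge0$. One must also check that exponential stability of the transformed system on $[t_0+2\tau,\infty)$ transfers back to \eqref{f2-3}. For that step I would use that on $[t_0,t_0+2\tau]$ solutions are bounded by $C\|\phi\|$ (Gronwall, using boundedness on compact intervals) and that solutions of \eqref{f2-3} are solutions of the transformed system, so uniform exponential estimates follow.
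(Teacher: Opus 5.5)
Your fallback route, invoking the criteria of \cite{Faria22} after the standard transformation of the pure diagonal delays, is exactly the paper's proof. That proof only notes that \eqref{Mv} means $\hat M(t)v\ge u$ for some positive vector $u$ and all large $t$ (hypothesis (H4) of \cite{Faria22}), and that \eqref{Mv2} gives hypothesis (H5) there. It then applies Theorem 4.1 and Corollary 4.2 of \cite{Faria22}. For (b) you should also use that Corollary 4.2 rather than \cite{Berez_Diblik18}. As the paper remarks, the related criterion there is the analogue of Corollary~\ref{cor3.1} (essentially $v=(1,1)$), so it is not clear it yields (b) for a general $v$ and unbounded coefficients.

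The real problem is the self-contained Halanay argument you present as your main line: the step marked ``(roughly)'' is wrong for a general weight $v$. At a point where $|x(t)|/v_1$ realizes the running maximum $M$, the term $a(t)\int_{t-\tau}^t\big(k(u)y(u-\tau)-a(u)x(u-\tau)\big)du$ is bounded by $M\,a(t)\int_{t-\tau}^t\big(a(u)v_1+k(u)v_2\big)du$, not by $M\,c_1(t)v_1$. The reason is that the $k$-part acts on $y$ and so carries the weight $v_2$. Symmetrically, the second equation produces $M\,b(t)\int_{t-\tau}^t\big(b(u)v_2+l(u)v_1\big)du$ instead of $M\,c_2(t)v_2$. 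These match the hypotheses in both equations only when $v_1=v_2$. What your computation actually proves is the M-matrix criterion for
\[
\tilde M(t)=\left[\begin{matrix}a(t)-a(t)\int_{t-\tau}^t a(u)\,du & -k(t)-a(t)\int_{t-\tau}^t k(u)\,du\\ -l(t)-b(t)\int_{t-\tau}^t l(u)\,du & b(t)-b(t)\int_{t-\tau}^t b(u)\,du\end{matrix}\right],
\]
and this is a genuinely different condition that cannot be repaired by choosing another vector.

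Take constant coefficients with $\tau=1$, $a=0.0125$, $b=0.5$, $k=l=0.05$. Then $\hat M$ has positive diagonal and $\det\hat M\approx 1.4\cdot 10^{-4}>0$, so \eqref{Mv} holds (e.g.\ with $v=(1,0.23)$). But $\det\tilde M\approx -7.1\cdot 10^{-4}<0$, so no positive $w$ satisfies $\tilde M w>0$, and your estimate gives nothing. Hence for $v_1\ne v_2$ the theorem as stated cannot be reached by your direct argument; it rests entirely on the cited result. Your proof is complete only once you check that \eqref{Mv} and \eqref{Mv2}, with the $v$-independent $c_1,c_2$ of the statement, are literally hypotheses (H4) and (H5) of \cite{Faria22}, which is what the paper's proof asserts.
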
	

\begin{proof} Condition \eqref{Mv} is equivalent to  $\hat M(t)v\ge u$ for $t$ sufficiently large and some positive vector $u=(u_1,u_2)$, thus hypothesis (H4) in \cite{Faria22} is satisfied.
Also, \eqref{Mv2} implies that hypothesis (H5)  in \cite{Faria22} holds.  Consequently, the conclusions in  (a) and (b) follow immediately from \cite[Theorem 4.1 and Corollary 4.2]{Faria22}. \end{proof}

Remark that, with the assumed limitation \eqref{constraints}, $a(t),b(t),k(t),l(t)$ are  always bounded, so the criterion in (a) can be applied.
In particular with $v=(1,1)$ in (a), we obtain:
\begin{cor}\label{cor3.1} Consider system \eqref{f2-3}, with $a(t),b(t),k(t),l(t) $ continuous, nonnegative and bounded  functions on $[t_0,\infty)$, and $a(t),b(t)$ positive. If
\[
\begin{split}
    &\limsup_{t\to\infty}\left [\int_{t-\tau}^t (a(u)+k(u))\, du+\frac{k(t)}{a(t)}\right]<1\\
&\limsup_{t\to\infty}\left [\int_{t-\tau}^t (b(u)+l(u))\, du+\frac{l(t)}{b(t)}\right]<1,\\
\end{split}
\]
then system \eqref{f2-3} is EAS.
\end{cor}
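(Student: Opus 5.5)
The plan is to obtain Corollary \ref{cor3.1} directly from Theorem \ref{thm3.1}(a) by choosing the test vector $v=(1,1)$. The coefficients are assumed bounded, continuous and nonnegative, with $a(t),b(t)>0$, so the standing hypotheses of part (a) hold. What remains is to verify \eqref{Mv} with $v=(1,1)$, namely
$$\liminf_{t\to\infty}\big(a(t)-c_1(t)-k(t)\big)>0,\q \liminf_{t\to\infty}\big(b(t)-c_2(t)-l(t)\big)>0.$$

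Start with the first row. Factoring out $a(t)$ gives
$$a(t)-c_1(t)-k(t)=a(t)\Big[1-\int_{t-\tau}^t(a(u)+k(u))\,du-\frac{k(t)}{a(t)}\Big].$$
The first hypothesis provides $\eta_1>0$ and $T$ such that the bracket is at least $\eta_1$ for all $t\ge T$.

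The obstacle is that the factor $a(t)$ must stay bounded away from zero, and this is not assumed explicitly. I would extract it from the hypothesis itself. Since $k\ge0$, the condition forces $\int_{t-\tau}^t a(u)\,du<1$ and $k(t)/a(t)<1$ eventually, and this alone does not bound $a$ from below. In the paper's setting, however, the model assumes $a(t)\ge a_0>0$ and $b(t)\ge b_0>0$ (see the introduction after \eqref{f1-1}). Using those lower bounds, $a(t)\eta_1\ge a_0\eta_1>0$, and the $\liminf$ of the first row is positive.

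The second row is handled identically: use $b(t)\ge b_0$ and the second hypothesis to get $b(t)-c_2(t)-l(t)\ge b_0\eta_2>0$. This yields \eqref{Mv} with $v=(1,1)$, and Theorem \ref{thm3.1}(a) then gives EAS. If $a$ or $b$ were only positive without a uniform lower bound, one would instead need to revisit hypothesis (H4) of \cite{Faria22}. I would check whether a weighted version, allowing the entries of $\hat M(t)v$ to degenerate proportionally to the diagonal, still suffices there, or else simply add the bounds $a\ge a_0$, $b\ge b_0$ explicitly.
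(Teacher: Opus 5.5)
Your argument is correct and follows the paper's own route: the paper obtains the corollary as Theorem~\ref{thm3.1}(a) with $v=(1,1)$, and you are right that this step silently requires $a(t)\ge a_0>0$ and $b(t)\ge b_0>0$ from the model's standing assumptions. That lower bound cannot be dropped. With only $a,b>0$ the statement is false: take $k=l=0$ and $a=b=\tfrac14e^{-t}$; all hypotheses hold, yet the solution with initial data $\equiv 1$ satisfies $x(t)\ge\tfrac23$ for all $t$. So no weighted variant of (H4) can rescue it, and you should state the bounds $a\ge a_0$, $b\ge b_0$ explicitly.
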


\begin{rmk}\label{rmk3.1} {\rm The criteria above are  based on  \cite{Faria22}, where the stability for  general linear non-autonomous DDEs with either finite or infinite delays was studied. Hence, they apply to the more general situation of a time-variang delay $\tau(t)$ (or several delays $\tau_{ij}(t),\, i,j=1,2$), instead of a constant delay $\tau$. We also note that Corollary \ref{cor3.1} is similar to criteria established  in
\cite[Corollary 3]{Berez_Diblik18}, however the additional condition in \cite{Berez_Diblik18} of having $a(t),b(t)$ bounded from below by  positive constants is not required in Theorem \ref{thm3.1}.(a). 
}\end{rmk}

The research in  \cite{Faria22} was generalized to nonlinear systems of FDEs (with possibly infinite delay) by Faria and Oliveira \cite{FOJDDE26}. Applied to system
 \eqref{f1-1},  the results in \cite{FOJDDE26} lead to the following criteria:

	\begin{thm}\label{thm3.2} Consider system \eqref{f1-1},  with $a(t),b(t),k(t),l(t)$ continuous and nonnegative  functions on $[t_0,\infty)$, and $a(t),b(t)$ positive.
Assume that:

(i) $\int_0^\infty a(t)\, dt=\infty$ and  $\int_0^\infty b(t)\, dt=\infty$;

(ii) there are continuous functions $G,H:\R_+\times \R^4\to \R_+$ such that, for all $(t,x,y,u,v)\in R_+\times \R^4$, it holds
$$|g(t,x,y,u,v)|\le G(t)|(x,y,u,v)|,\q |h(t,x,y,u,v)|\le H(t)|(x,y,u,v)|,$$
where $|\cdot|$ is the maximum norm in $\R^4$;

(iii) $\limsup_{t\to \infty} B_i(t)<1, \ i=1,2$, where
\[
\begin{split}
B_1(t):&=\frac{k(s)+G(t)}{a(t)}+\int_{t-\tau}^t(a(s)+k(s)+G(s))\, ds,\\
B_2(t):&=\frac{l(t)+H(t)}{b(t)}+\int_{t-\tau}^t(b(s)+l(s)+H(s))\, ds.
\end{split}
\]
Then the zero solution of  \eqref{f1-1} is stable and a global attractor of all solutions.
Moreover, under (ii)-(iii) and condition (i) above replaced by
 
(i') $\liminf_{t\to\infty} a(t)=\ul{a}>0$ and  $\liminf_{t\to\infty} b(t)=\ul{b}>0$,\\
then the zero solution of  \eqref{f1-1} is globally exponentially stable.
\end{thm}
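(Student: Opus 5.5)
The plan is to reduce Theorem \ref{thm3.2} to the general criterion of Faria and Oliveira \cite{FOJDDE26} for nonlinear FDEs with pure delayed diagonal terms. That criterion is the nonlinear analogue of \cite{Faria22}, already used in Theorem \ref{thm3.1}. The idea is to write \eqref{f1-1} as
$$x_i'(t)=-\beta_i(t)x_i(t-\tau)+f_i(t,x_t,y_t),\q i=1,2,$$
with $\beta_1=a$, $\beta_2=b$, and $f_1(t,x_t,y_t)=k(t)y(t-\tau)+g(\cdot)$, $f_2$ defined analogously. The first step is the standard transformation that moves the diagonal delayed term to a non-delayed one:
$$x'(t)=-a(t)x(t)+a(t)\int_{t-\tau}^t x'(s)\,ds+f_1(t,\dots).$$
Substituting the equation for $x'(s)$ inside the integral gives $x'(t)=-a(t)x(t)+\mathcal{F}_1(t,\cdot)$. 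Here $\mathcal{F}_1$ depends on the solution over $[t-2\tau,t]$, and by (ii) it satisfies
$$|\mathcal{F}_1|\le a(t)\int_{t-\tau}^t\big(a(s)+k(s)+G(s)\big)\,ds\,\|(x,y)\|_{[t-2\tau,t]}+\big(k(t)+G(t)\big)\|(x,y)\|_{[t-\tau,t]}.$$
In this estimate the bound $|g|\le G|(x,y,u,v)|$ controls both current and delayed arguments in the max norm. So $|\mathcal{F}_1|\le a(t)B_1(t)\|\cdot\|$, and similarly $|\mathcal{F}_2|\le b(t)B_2(t)\|\cdot\|$. These are exactly the hypotheses of the main theorem in \cite{FOJDDE26}: the diagonal decay dominates the sublinear perturbation in the ratio sense, with $\limsup B_i<1$ and $\int^\infty a=\int^\infty b=\infty$. (The typo $k(s)$ in $B_1$ should read $k(t)$.)

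The second step verifies stability directly, by a maximum-norm argument. Fix $\theta$ with $\limsup_{t\to\infty} B_i(t)<\theta<1$ and a large $T$. For $t\ge T$, if $|x(t)|$ reaches a new maximum $M$ over $[T-2\tau,t]$, then $\frac{d}{dt}|x|\le -a(t)M+a(t)\theta M<0$. This shows the solution stays bounded by its initial sup norm over a $2\tau$-window. On the initial interval $[t_0,T]$, continuous dependence and Gronwall give uniform control.

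The third step proves attractivity by a $\limsup$ argument. Let $L=\max\big(\limsup|x|,\limsup|y|\big)$, which is finite by boundedness. Suppose $L>0$. Using the variation-of-constants formula
$$x(t)=e^{-\int_{T}^t a}x(T)+\int_T^t e^{-\int_s^t a}\mathcal{F}_1(s)\,ds$$
with $\int^\infty a=\infty$, one gets $\limsup|x|\le\theta(L+\vare)$, and similarly for $y$. Hence $L\le\theta L$, a contradiction, so $L=0$. For exponential stability under (i'), the same domination gives the estimate for $e^{\delta t}(x,y)(t)$ for small $\delta>0$, and the standard rescaling argument yields global exponential stability.

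The main obstacle is the first step: the substitution doubles the memory to $2\tau$ and mixes in the nonlinearity, and the exact form of the hypotheses in \cite{FOJDDE26} must be matched. If that match fails, I would fall back on the direct $\limsup$ argument above. The first thing to check there is that the term $a(t)\int_{t-\tau}^t G(s)|(\cdot)|\,ds$ is correctly absorbed using the max norm over the longer window.
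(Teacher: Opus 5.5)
Your proposal is correct and takes essentially the paper's route. The paper's entire proof is the citation of Theorem 3.3 and Corollary 3.4 of \cite{FOJDDE26}, and your reduction (writing $x(t-\tau)=x(t)-\int_{t-\tau}^t x'(s)\,ds$ to get $|\mathcal{F}_1|\le a(t)B_1(t)\|(x,y)\|_{[t-2\tau,t]}$) is exactly the mechanism behind that citation. Your self-contained fallback is also sound: the max-norm bound for stability (with the running maximum taken jointly over $x$ and $y$), the variation-of-constants $\limsup$ argument for attractivity, and the $e^{\delta t}$ rescaling using $\liminf a,\liminf b>0$ for the exponential part all go through. You are also right that $k(s)$ in $B_1$ should read $k(t)$.
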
	

\begin{proof} The results on the  global attractivity and on the global exponential stability of the zero solution  follow
 immediately by applying Theorem 3.3 and Corollary 3.4 in \cite{FOJDDE26}, respectively,  to \eqref{f1-1}. 
\end{proof}

\subsection{Stability with zero hostility over time and incomplete information about the armament matrix}

We now consider system \eqref{f2-3} under the assumption of incomplete information about the armament matrix $A(t)$. Assuming that $A(t)\approx A$,  \eqref{f2-3} can be regarded
 as a perturbation of the autonomous linear DDE
 $u'(t)=Au(t-\tau)$ already studied in Section 2:
\begin{equation}\label{f2-5}
\frac{du}{dt}= Au(t-\tau)+\Delta A(t)u(t-\tau).
\end{equation}
Here, $A(t)=A+\Delta A(t)$, where $A$ is a matrix with constant elements and $\Delta A(t)= [\Delta a_{ij}(t)]$ is a $2\times 2$ matrix whose elements are continuous and satisfy
$$|\Delta a_{ij}(t)|\leq \bar{a}_{ij}, \ \ i,j = 1,2,$$
for some constant positive values $\bar{a}_{ij}$.
\begin{thm}\label{thm3.3} 
(a) If  the equilibrium $u=0$ of $u'(t)=Au(t-\tau)$ is   stable and there exists $\eta:\R_+\to \R$ with $\int_0^\infty \eta(t)\, dt<\infty$ such that
\begin{equation}\label{eta}\|\Delta A(t)\|\le \eta(t),\q t\ge 0,\end{equation}
then system \eqref{f2-5} is uniformly stable (on $[0,\infty)$).\\
(b) If  the equilibrium $u=0$ of $u'(t)=Au(t-\tau)$ is  EAS, then there exists 
 $\eta>0$ such that,  if
$|\Delta a_{ij}(t)|<\eta$ for $ i,j = 1,2$, 
 system \eqref{f2-5} is  EAS.
\end{thm}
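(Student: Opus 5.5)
The natural tool is the variation-of-constants formula for linear DDEs, with perturbation estimates of Gronwall type as in Hale \cite{Hale77}. Let $T(t)$ be the solution semigroup on $C=C([-\tau,0];\R^2)$ of $u'(t)=Au(t-\tau)$, and let $X_0$ be the usual jump function at $0$, so that $X_0(\th)=0$ for $\th\in[-\tau,0)$ and $X_0(0)=I$. A solution $u(t)=u(t,s,\phi)$ of \eqref{f2-5} with $u_s=\phi$ then satisfies
\[
u_t=T(t-s)\phi+\int_s^t T(t-\xi)X_0\,\Delta A(\xi)u(\xi-\tau)\,d\xi ,\q t\ge s .
\]
Equivalently, this is the componentwise formula $u(t)=U(t-s)\phi+\int_s^t U(t-\xi)\Delta A(\xi)u(\xi-\tau)\,d\xi$ for the fundamental matrix $U$. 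I would first note that $|u(\xi-\tau)|\le \|u_\xi\|$, so the perturbation term in the integrand is bounded by $\|\Delta A(\xi)\|\,\|u_\xi\|$.

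\emph{Part (a).} Stability of the autonomous equation means $\|T(t)\|\le M$ for all $t\ge 0$ and some $M\ge 1$; for an autonomous linear DDE, stability is automatically uniform. Taking norms in the variation-of-constants formula and using the extended estimate $\|T(t)X_0\|\le M$ gives
\[
\|u_t\|\le M\|\phi\|+M\int_s^t \eta(\xi)\|u_\xi\|\,d\xi .
\]
Gronwall's inequality then yields $\|u_t\|\le M\|\phi\|\exp\big(M\int_0^\infty\eta\big)$ for all $t\ge s\ge 0$. The constant is independent of $s$, which is exactly uniform stability on $[0,\infty)$.

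\emph{Part (b).} Here I would use that EAS of $u'=Au(t-\tau)$ (Theorem \ref{thm2.1}, i.e.\ $\tau<\tau_-$) gives constants $M\ge1$ and $\ga>0$ with $\|T(t)\|\le Me^{-\ga t}$. The same bound holds for $\|T(t)X_0\|$, or equivalently $|U(t)|\le Me^{-\ga t}$. If $|\Delta a_{ij}|<\eta$, then $\|\Delta A(t)\|\le c\eta$ for a norm-dependent constant $c$, and therefore
\[
e^{\ga t}\|u_t\|\le M e^{\ga s}\|\phi\|+Mc\eta\int_s^t e^{\ga\xi}\|u_\xi\|\,d\xi .
\]
Gronwall then gives $\|u_t\|\le M\|\phi\|e^{-(\ga-Mc\eta)(t-s)}$. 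Choosing $\eta<\ga/(Mc)$ makes the exponent negative uniformly in $s$, which gives EAS of \eqref{f2-5}.

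The main technical point I expect is the justification that the exponential (or uniform) bound on $T(t)$ over $C$ transfers to $T(t)X_0$, which is defined on the larger space $BC$ of functions with a jump at $0$. I would handle this through the fundamental matrix $U(t)$, i.e.\ the solution with initial data $X_0$. Its exponential decay follows from the characteristic roots lying in $Re\,\la<-\ga'$ (again by Theorem \ref{thm2.1}) together with the standard Laplace-transform estimate in \cite{HaleLunel}. Working with the componentwise formula rather than the segment formula then avoids the $BC$ issue entirely. In part (a), boundedness of $U$ when the equation is merely stable follows similarly, since any roots on the imaginary axis must be simple for stability to hold.
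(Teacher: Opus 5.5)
Your proposal is correct and follows essentially the paper's route: variation of constants plus Gronwall. For (a) the paper simply cites Hale's perturbation theorem (whose proof is the computation you give), and for (b) it sketches the same exponential estimate you carry out. The ``technical point'' about $T(t)X_0$ is in fact immediate here: $U(t)=I$ on $[0,\tau]$ gives $U_\tau\in C$ and $U(t)=(T(t-\tau)U_\tau)(0)$ for $t\ge\tau$. This also spares you the inaccurate claim that imaginary roots must be simple; stability only forces them to be semisimple, as $A=0$ shows.
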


\begin{proof} For general linear DDEs, it is well-known that  the zero solution is stable (respectively  asymptotically stable)  if and only if all its solutions are bounded on $[0,\infty)$ (respectively tend to zero as $t\to\infty$). 
Recall also that for the autonomous linear system $u'(t)=Au(t-\tau)$, the stability coincides with the uniform stability on $[0,\infty)$ and the asymptotic stability coincides with the exponential asymptotic stability  \cite{Hale77,HaleLunel}. From Theorem 5.1 of  \cite[Chapter  9]{Hale77}, if $u'(t)=Au(t-\tau)$ is  uniformly stable,  for perturbations sufficiently small satisfying \eqref{eta}, the perturbed system \eqref{f2-5} is uniformly stable as well.
Now, assume that $u'(t)=Au(t-\tau)$ is EAS. For the semiflow $\{T(t)\}_{t\ge 0}$ generated by this linear system,
there are constants $K>0, \al>0$ such that $\|T(t)\phi\|\le Ke^{-\al t}\|\phi\|$ for all $t\ge 0$ and $\phi\in C$. The result follows by standard arguments, similar to the ones used for ODEs,  using the estimate above,  the variation of constants formula and a Gronwall inequality. See \cite{Hale77} for more details.
\end{proof}

\begin{rmk}\label{rmk3.2}  {\rm
The above result (b)
does not provide much practical information, since it does not specify how small $\eta$ must be. One can however give an estimate for such $\eta$. In fact, as before, let $A=\left [\begin{matrix}-a&k\\l&-b\\ \end{matrix}\right]$ with $a,b,k,l>0$ and $\det A>0$, and  $\tau\in (0,\tau_{-})$, for $\tau_{-}$ defined in \eqref{0.1}, so that the system $u'(t)=Au(t-\tau)$ is EAS. Let $\al>0$ be such that all  characteristic roots $\la$ of this system satisfy $Re\, \la<-\al$. If $\eta\in (0,\al)$, then the usual arguments based on the variation of constants formula and  Gronwall inequality show that the perturbed system  \eqref{f2-5} is  EAS.}
\end{rmk}

\begin{rmk}\label{rmk3.2}  {\rm Using the perturbation theory in \cite[Chapter 9]{Hale77}, we can deduce results similar to the ones in Theorem \ref{thm3.3} for the local stability of the zero solution  of the original race arms system \eqref{f1-1}, assuming that $g(t, 0,0,0,0)=f(t, 0,0,0,0)=0$. Clearly, this setting can be applied to a positive equilibrium $P_0$ after a standard translation of $P_0$ to the origin.}
\end{rmk}

%


%
%

	\section{Asymptotic stability for a multi-dimensional equation $x'(t)=A(t)x(t-\tau)$ via special solutions}
	\setcounter{equation}{0}	
	
	Another approach to study \eqref{f2-3} includes Ryabov's concept  of ``special solutions" for linear DDEs, which has been exploited by several researchers since the pioneering works of Ryabov  \cite{Ryabov} and Driver \cite{Driver68,Driver}.	
\begin{defn} {\rm Consider a  linear DDE
\begin{equation}\label{linearDDE}
x'(t)=L(t)x_t,
\end{equation}
where $(t,\phi)\mapsto L(t)\phi\in \R^p$ is continuous on $\R\times C$ and 
 $L(t)$ is a continuous  linear operator from $C$ to $\R^p$, for all $t\in\R$.  For the abstract DDE  \eqref{linearDDE}, $C:=C([-\tau, 0];\R^p)$ is the phase space and, as usual, $x_t\in C$ is given by $x_t(\th)=x(t+\th)$ for $-\tau\le \th\le 0$.
 A {\bf special solution} of  \eqref{linearDDE} (if it exists)  is a solution defined on $\R$ such that $|x(t)|e^{t/\tau}$ is bounded for $t\le 0$. A {\bf special matrix solution} (if it exists) is a $p\times p$ matrix ${\cal X}(t)$ whose columns are special solutions of \eqref{linearDDE}  and such that ${\cal X}(0)=I$.}
\end{defn}



 Here, we shall consider a $p$-dimensional version of \eqref{f2-4}, for which
some important results established in Arino et al.  \cite{AGP}     are collected in the next lemma. See  also \cite{GPDIE97} and references therein.

\begin{lem}\label{lem3.1} Consider a fixed norm  on $\R^p$. Let $A(t)$ be a $p\times p$ matrix of continuous functions and assume that there are $m>0$ and $t_1\in\R$ such that
\begin{equation}\label{SSol_hyp}\sup_{t\ge t_1}\|A(t)\|\le m\q {\rm and}\q me\tau <1,\end{equation}
where $\|A(t)\|$ denotes de operator norm of the map $x\mapsto A(t)x$.
Then:\\
(a)  There exists a special matrix solution ${\cal X}(t)$ of\begin{equation}\label{linearDDE2}x'(t)=A(t)x(t-\tau);\end{equation} 
(b) For ${\cal X}(t)$, it holds: (i) its columns ${\cal X}_i(t)\, (1\le i\le p)$ satisfy
$|{\cal X}_i(t)|\le e^{\la_0(t-t_0)},\ t\ge t_0\ge 0,$
where $\la_0\in (-1/\tau,0)$ is the unique real solution of the equation
$me^{-\la \tau}=-\la;$
(ii) ${\cal X}(t)$ is non-singular for all $t\in\R$;
(iii) for each $\phi\in C$, the solution $x(t,0,\phi)$ of $x'(t)=A(t)x(t-\tau)$
 with initial condition $x_0=\phi$ satisfies
$$x(t,0,\phi)={\cal X}(t)\big [\ell (\phi)+o(1)\big]\q {\rm as}\q t\to\infty,$$
where  $\lim_{t\to\infty} {\cal X}^{-1}(t)X(t,0,\phi)=:\ell (\phi)\in\R^p;$
(iv) ${\cal X}(t)$ is a fundamental matrix solution of the ODE
$$x'=M(t)x,$$
where the $p\times p$ matrix of continuous functions $M(t)$ is defined by
\begin{equation}\label{M(t)}M(t)=\sum_{n=0}^\infty M_n(t,t),\q t\in\R,\end{equation}
 with the series in \eqref{M(t)} converging uniformly on $\R$,
for a sequence of matrix functions recursively given by
\begin{equation}\label{M_n}
\begin{split}
M_0(t,s)&=A(s),\q s\le t,\\
M_{n+1}(t,s)&=-A(s)\int_{s-\tau}^t M_n(t,u)\, du,\q s\le t, n\in \N_0.
\end{split}
\end{equation}
\end{lem}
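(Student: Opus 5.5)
The lemma is a compilation of results from Arino et al. \cite{AGP}. The cleanest proof is therefore to verify that \eqref{linearDDE2} falls under their framework and quote the conclusions. Still, I would give the skeleton of the argument so the role of the condition $me\tau<1$ is visible.

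\emph{Step 1: the scalar bound.} Consider $f(\la)=\la+me^{-\la\tau}$. On $(-\infty,0]$ it is convex, and its minimum sits at $\la^*=\tau^{-1}\ln(m\tau)$, where it takes the value $\tau^{-1}(1+\ln(m\tau))$. This value is negative exactly when $me\tau<1$. Also $f(0)=m>0$ and $f(-1/\tau)=-1/\tau+me<0$. Hence $f$ has a root $\la_0\in(-1/\tau,0)$, unique in that interval because $f$ is increasing for $\la>\la^*$ and $\la^*\le -1/\tau$.

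\emph{Step 2: construct the special solutions, giving (a) and (b)(i).} I would look for solutions of the form $x(t)={\cal X}(t)c$ satisfying the integral equation
$$x(t)=c+\int_0^t A(s)x(s-\tau)\,ds$$
on the whole line. For $t\le 0$ the integral is taken backwards. I would work in a weighted space of functions with $\sup_{t}|x(t)|e^{-\la_0 t}<\infty$ for $t\ge 0$, and with the growth condition $|x(t)|e^{t/\tau}$ bounded for $t\le 0$. In that space the map $x\mapsto c+\int_0^t A(s)x(s-\tau)\,ds$ should be a contraction. The key estimate is
$$\int e^{\la_0(s-\tau)}m\,ds=\frac{m e^{-\la_0\tau}}{|\la_0|}\,e^{\la_0 t}.$$
The factor $me^{-\la_0\tau}/|\la_0|$ equals $1$ at $\la_0$. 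So for a strict contraction I would use a slightly perturbed exponent, or else a fixed-point/monotone iteration argument as in \cite{AGP,Driver}. The $t\le 0$ part requires $m\tau e<1$, which is the same condition. Taking $c=e_i$ gives the columns ${\cal X}_i$ and the estimate (i).

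\emph{Step 3: (b)(iv) and (b)(ii).} Differentiating along special solutions, one shows that they satisfy $x'(t)=M(t)x(t)$. The reason is that $x(t-\tau)$ can be expressed via $x(t)$ by iterating $x(s-\tau)=x(t)-\int_{s-\tau}^t A(u)x(u-\tau)\,du$. This is exactly the recursion \eqref{M_n}, and the bound $\|M_n\|\le m(m\tau e)^n$-type estimates give uniform convergence of \eqref{M(t)}. Nonsingularity (ii) then follows from Liouville's formula, since ${\cal X}$ is a fundamental matrix of an ODE with continuous coefficients and ${\cal X}(0)=I$.

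\emph{Step 4: (b)(iii).} This is the asymptotic representation, and I expect it to be the main obstacle. The plan is to show that the difference between an arbitrary solution and its projection onto special solutions decays faster, namely at a rate $e^{-t/\tau}$-like compared with ${\cal X}(t)$. The limit $\ell(\phi)=\lim_{t\to\infty}{\cal X}^{-1}(t)x(t,0,\phi)$ then exists. This is Driver's asymptotic-equivalence argument. Here I would rely directly on \cite{AGP,Driver,GPDIE97} rather than reproduce it.
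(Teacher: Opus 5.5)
Your top-level route is the paper's own: the paper gives no argument for this lemma beyond collecting results from \cite{AGP} (see also \cite{GPDIE97}). The skeleton you add, however, breaks down in Step~2, and the problem is not technical. On $[0,\infty)$ the map $x\mapsto c+\int_0^t A(s)x(s-\tau)\,ds$ cannot preserve $\sup_{t\ge 0}|x(t)|e^{-\la_0 t}<\infty$. The constant $c\neq 0$ does not decay, and the integral term is merely bounded: it is at most $K(1-e^{\la_0 t})$ if $|x(s)|\le Ke^{\la_0 s}$ for $s\ge -\tau$. The identity you display is the integral over $[t,\infty)$. That integral belongs to a different map, $x(t)=-\int_t^\infty A(s)x(s-\tau)\,ds$, which is incompatible with prescribing $x(0)=c$.

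No perturbation of the exponent helps, because forward decay of special solutions is false. For $p=1$ and $A(t)\equiv m>0$ with $me\tau<1$, the special matrix solution is ${\cal X}(t)=e^{\mu t}$ with $\mu=me^{-\mu\tau}>0$. For $A(t)\equiv -m'$ with $0<m'<m$, it is $e^{\la' t}$ with $\la'\in(\la_0,0)$. Both contradict (b)(i) as printed. Moreover, if (i) held, (iii) would make every such system asymptotically stable, whereas right after the lemma the paper says asymptotic stability amounts to ${\cal X}(t)\to 0$. So (i) is misprinted. What you can actually prove, for every special solution $x$, is the two-sided estimate
\[
e^{\la_0(t-t_0)}|x(t_0)|\le |x(t)|\le e^{-\la_0(t-t_0)}|x(t_0)|,\qquad t\ge t_0 .
\]

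Your $t\le 0$ part is the right mechanism, and it is all that is needed. Fix $t$. A special solution satisfies $x(s)=x(t)-\int_s^t A(u)x(u-\tau)\,du$ for $s\le t$. In the norm $\sup_{s\le t}|x(s)|e^{(s-t)/\tau}$, which is dictated by the definition of special solutions, this map is a contraction with constant $m\tau e<1$. This is where the hypothesis enters, and it also gives uniqueness of the special solution with a prescribed value.

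Since $-\la_0=me^{-\la_0\tau}$, the closed set $\{y:\ |y(s)|\le e^{-\la_0(t-s)}|x(t)|,\ s\le t\}$ is invariant under the map. Hence $|x(t-\tau)|\le e^{-\la_0\tau}|x(t)|$, so $|x'(t)|\le -\la_0|x(t)|$, which yields the two-sided bound. Forward in time nothing has to be constructed: the method of steps continues the solution. This backward bound is also what makes the remainder vanish when you iterate $x(s-\tau)=x(t)-\int_{s-\tau}^t A(u)x(u-\tau)\,du$ in Step~3, so it is the real key estimate of the lemma.

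Finally, this construction needs $\|A(s)\|\le m$ on the whole line, as in \cite{AGP}, not only for $s\ge t_1$. So do the uniform convergence of the series for $M(t)$ on $\R$ and the nonsingularity of ${\cal X}$ on all of $\R$. With the hypothesis as stated, you must first redefine $A$ on $(-\infty,t_1)$.
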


\begin{rmk} 
\label{rmk3.3'}
{\rm In fact, the existence of a special matrix solution for \eqref{linearDDE2} follows under the  weaker assumption $\sup_{t\ge t_1}\int_{t-\tau}^t\|A(t)\|\, dt<1/e,$ rather than \eqref{SSol_hyp} (see \cite{GPDIE97}). We have however imposed the more restrictive constraint  \eqref{SSol_hyp} as in \cite{AGP}, since the reasoning pursued here requires  that the series in \eqref{M(t)} converges uniformly on $\R$, so that one derives that $M(t)$ is continuous. Moreover, condition \eqref{SSol_hyp} also allows us to use the
boundedness of $\|A(t)\|$ for $t>0$ large.}\end{rmk}

Under hypothesis \eqref{SSol_hyp}, it is therefore clear that  the zero solution  of \eqref{f2-4} is asymptotically stable if the same happens with the zero solution  of the ODE system $x'=M(t)x$, i.e., if
${\cal X}(t)\to 0$ as $t\to\infty$. Unfortunately, in general the properties of the  matrix $M(t)$ in \eqref{M(t)} are not easy to deduce, much less is it practicable to compute it, since $M(t)$ is given by the recursive scheme in \eqref{M_n}. Next,  we aim to derive the stability of $x'=M(t)x$ under some constraints. We start with an auxiliary statement.

\begin{lem}\label{lem3.2}  Consider \eqref{linearDDE2} where $A(t)$ is a $p\times p$ matrix of continuous functions.
Fix any norm on $\R^p$, and assume \eqref{SSol_hyp}. With the above notations,  write 
$$(-1)^n M_{n}(t,t)=\tau^{n}A^{n+1}(t)+A(t)\ga_n(t),\q n\in \N,$$
and $\Ga(t)=\sum_{n=1}^\infty (-1)^n\ga_n(t)$.
If 
 the ODE system 
 $$x'(t)=\left [A(t)(I+\tau A(t))^{-1}+A(t)\Ga(t)\right]x(t)$$
is GES,
then,  system \eqref{linearDDE2} is asymptotically stable.
\end{lem}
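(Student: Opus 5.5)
The plan is to show that the matrix $M(t)$ of Lemma \ref{lem3.1}(iv) coincides with $A(t)(I+\tau A(t))^{-1}+A(t)\Ga(t)$. Then the ODE in the statement is exactly $x'=M(t)x$. Since ${\cal X}(t)$ is a fundamental matrix of this ODE with ${\cal X}(0)=I$, the ODE being GES gives $\|{\cal X}(t)\|\le Ke^{-\al t}\to 0$. Lemma \ref{lem3.1}(iii) then yields $x(t,0,\phi)={\cal X}(t)[\ell(\phi)+o(1)]\to 0$ for every $\phi\in C$. For linear DDEs, convergence to zero of all solutions is equivalent to asymptotic stability (as recalled in the proof of Theorem \ref{thm3.3}), so \eqref{linearDDE2} is asymptotically stable. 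The initial time $0$ is harmless: the same argument applies with any initial time, or one can note that solutions starting at $t_0$ are solutions from time $0$ after shifting.

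The algebraic identity is the core step. By \eqref{M(t)} and the definition of $\ga_n$,
$$M(t)=A(t)+\sum_{n\ge1}(-1)^n\big[\tau^nA^{n+1}(t)+A(t)\ga_n(t)\big]=A(t)\sum_{n\ge0}(-\tau A(t))^n+A(t)\Ga(t).$$
Under \eqref{SSol_hyp} we have $\tau\|A(t)\|\le m\tau<1/e<1$ for $t\ge t_1$. Hence the Neumann series converges absolutely and uniformly to $(I+\tau A(t))^{-1}$. Splitting the convergent series into two parts requires that $\sum(-1)^n\ga_n$ converge, which follows because both $\sum M_n(t,t)$ (Lemma \ref{lem3.1}(iv)) and the Neumann part converge.

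One gap is that the series expression is only guaranteed for $t\ge t_1$. I would handle it by noting that stability depends only on large $t$: the behaviour of ${\cal X}$ on the compact interval $[0,t_1]$ is bounded, so GES of the ODE on $[t_1,\infty)$ suffices. Alternatively, define $\ga_n$ only for $t\ge t_1$ and interpret the ODE there.

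The main obstacle I expect is justifying that $\ga_n$ is well defined, i.e. that $(-1)^nM_n(t,t)-\tau^nA^{n+1}(t)$ genuinely has $A(t)$ as a left factor. This follows from the recursion \eqref{M_n}, since $M_{n}(t,s)=-A(s)\int\cdots$ for $n\ge1$. Setting $s=t$ gives a left factor $A(t)$, and an induction shows that the $\tau^nA^{n+1}(t)$ part arises from freezing $A(u)\equiv A(t)$ on the integration ranges. Thus $\ga_n$ can be taken as the remainder. Its precise form is not needed, only its convergence, which I would deduce from uniform convergence in \eqref{M(t)} as above.
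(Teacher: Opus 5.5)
Your proposal is correct and follows the paper's proof. The Neumann series identity $\sum_{n\ge0}(-\tau)^nA^{n+1}(t)=A(t)(I+\tau A(t))^{-1}$, valid for $t\ge t_1$ because $m\tau<1/e$, identifies the matrix $M(t)$ of Lemma \ref{lem3.1}(iv) with $A(t)(I+\tau A(t))^{-1}+A(t)\Ga(t)$, and asymptotic stability then follows from Lemma \ref{lem3.1}. Your additional details (decay of ${\cal X}(t)$ from GES on $[t_1,\infty)$, then part (iii)) only make explicit what the paper compresses into ``from Lemma \ref{lem3.1} we conclude.'' The one minor caveat concerns the splitting step: your argument yields convergence of $\sum(-1)^nA(t)\ga_n(t)$, which is all you need, not convergence of $\Ga(t)$ itself when $A(t)$ is singular. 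That convergence is already presupposed by the statement, so nothing is lost.
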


\begin{proof} First, observe that $\|\tau A(t)\| <1/e<1$ for $t\ge t_1$, for some $t_1\in\R$, thus the matrix  $(I+\tau A(t))^{-1}$ is well defined for $t\ge t_1$. Since
$$\sum_{n=0}^\infty (-1)^n\tau^{n}A^{n+1}(t)=A(t)\big(I+\tau A(t)\big)^{-1},$$
for $M(t)=\sum_{n=0}^\infty M_n(t,t)$ as in \eqref{M(t)}, this leads to 
$M(t)=A(t)\big(I+\tau A(t)\big)^{-1}+A(t)\Ga (t)$. If
the system $x'(t)=\left [A(t)(I+\tau A(t))^{-1}+A(t)\Ga(t)\right]x(t)$ is GES, 
 from Lemma \ref{lem3.1} we conclude that system \eqref{f2-4} is asymptotically stable, that is, the zero solution of \eqref{linearDDE2} is stable and a global attractor.
\end{proof}

\begin{thm}\label{thm4.1}  Fix any norm on $\R^p$, and consider \eqref{linearDDE2} where $A(t)=[a_{ij}(t)]$ is a $p\times p$ matrix of continuous functions.
Assume the following conditions:

(h1) there are $m>0$ and $t_1\in\R$ such that
\begin{equation}\label{SSol_A(t)}\sup_{t\ge t_1}\|A(t)\|\le m\q {\rm and}\q me\tau<1;\end{equation}

(h2) $\dps\lim_{t\to \infty}\Big (\max_{s\in[t-\tau,t]} \|A(t)-A(s)\|\Big)=0$;


(h3) for $\hat A(t)=\big [|a_{ij}(t)|\big ]$ and $ S_1:=\frac{m\tau}{1-m\tau}\log\left(\frac{1-m\tau}{1-2m\tau}\right)$, the ODE system $x'(t)=\big[A(t)(I+\tau A(t))^{-1}+S_1\hat A(t)\big]x(t)$ 
is GES.\vskip2mm

Then,  system \eqref{linearDDE2} is asymptotically stable.
\end{thm}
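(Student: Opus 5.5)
The plan is to reduce Theorem \ref{thm4.1} to Lemma \ref{lem3.2}. That lemma applies once we show that the ODE $x'=[A(t)(I+\tau A(t))^{-1}+A(t)\Ga(t)]x$ is GES. The strategy is to view this ODE as a perturbation of the GES system in (h3). This requires two ingredients: an estimate of $\|\Ga(t)\|$, and the replacement of $A(t)\Ga(t)$ by a term dominated by $S_1\hat A(t)$ plus a remainder that vanishes as $t\to\infty$.

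\textbf{First step: the structure of $\ga_n$.} Unwind the recursion \eqref{M_n}. If $A$ were constant, we would get $M_n(t,s)=(-1)^n A^{n+1}\, p_n(t-s)$, with $p_n$ a polynomial and $p_n(0)=\tau^n$ (the case without the integration correction). For variable $A(t)$, write $M_n(t,u)$ as products of $A$ evaluated at points of $[t-(n+1)\tau,t]$. Subtracting $\tau^nA^{n+1}(t)$ leaves two kinds of terms:
\begin{itemize}
\item terms built from differences $A(u)-A(t)$, which by (h2) (iterated over windows of length $\tau$) are $o(1)$ uniformly in $n$, after summing with weights $(m\tau)^n$;
\item terms coming from the nonzero length of the intervals $[u-\tau,t]$ with $u<t$.
\end{itemize}
By induction, prove bounds of the form $\|M_n(t,s)\|\le m\,c_n(t-s)$ with explicit $c_n$. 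From these, obtain
$$\|\ga_n(t)\|\le \beta_n+o(1),\qquad \sum_{n\ge1}\beta_n\le S_1 .$$
The specific form $S_1=\frac{m\tau}{1-m\tau}\log\frac{1-m\tau}{1-2m\tau}$ suggests that $\beta_n$ behaves like $\frac{m\tau}{1-m\tau}\cdot\frac{1}{n}\big(\frac{m\tau}{1-m\tau}\big)^n$. Indeed $\log\frac{1-m\tau}{1-2m\tau}=-\log(1-q)=\sum q^n/n$ with $q=\frac{m\tau}{1-m\tau}<1$, and $q<1$ holds since $me\tau<1$. So the target is the bound $\|\ga_n\|\le q^{n+1}/n$ asymptotically. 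This can be achieved by tracking the geometric contributions $(m\tau)^j$ and summing a binomial-type series.

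\textbf{Second step: comparison.} Write
$$A(t)\Ga(t)=E_1(t)+E_2(t),$$
where $|E_1(t)x|\le S_1\hat A(t)|x|$ componentwise (in the chosen norm, using $|A(t)y|\le \hat A(t)|y|$), and $\|E_2(t)\|\to0$. Then apply the standard comparison or perturbation result for linear ODEs. This proceeds in two parts:
\begin{itemize}
\item Since the (h3) system is GES with majorant $S_1\hat A$, a Ważewski/Kamke-type differential-inequality argument (in terms of $|x|$, or via the logarithmic norm) shows that $x'=[A(I+\tau A)^{-1}+E_1]x$ is dominated. Hence it is also GES.
\item The vanishing perturbation $E_2$ is then handled by Gronwall's inequality and the variation of constants formula (the usual roughness of exponential dichotomy).
\end{itemize}
Lemma \ref{lem3.2} then gives asymptotic stability of \eqref{linearDDE2}.

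\textbf{Main obstacle.} The hard part is the first step: obtaining the sharp bound $\sum\|\ga_n\|\le S_1+o(1)$ from the recursion, uniformly in $n$. The combinatorics of the iterated integrals over intervals $[s-\tau,t]$ of growing length must be controlled, while (h2) must absorb the variation of $A$ with constants that do not blow up with $n$. I would first try the case of constant $A$ to identify $\beta_n$ exactly. Then I would treat the variable case as a perturbation, using the uniform convergence of the series \eqref{M(t)} guaranteed by Lemma \ref{lem3.1} to exchange the limits.
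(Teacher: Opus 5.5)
Your outline has the same architecture as the paper's proof. You reduce to Lemma~\ref{lem3.2}, bound $\|\Ga(t)\|$ by $S_1+O(\vare)$ for large $t$, and absorb the error by robustness of GES. However, the decisive estimate is only announced, and the guide you give for it is wrong.

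The paper's device is the recursion $\ga_1(t)=\int_{t-\tau}^t(A(u)-A(t))\,du$, $\ga_{n+1}(t)=\tau A(t)\ga_n(t)+(-1)^n\int_{t-\tau}^t[M_n(t,u)-M_n(t,t)]\,du$. It is combined with $M_n(t,t)-M_n(t,s)=A(t)\int_{s-\tau}^{t-\tau}M_{n-1}(t,u)\,du-(A(t)-A(s))\int_{s-\tau}^{t}M_{n-1}(t,u)\,du$. Each step therefore involves a single difference $A(t)-A(s)$ with $s\in[t-\tau,t]$, and (h2) is never iterated over windows.

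Your termwise target $\|\ga_n\|\lesssim q^{n+1}/n$ cannot hold. For constant $A$ one gets $(-1)^nM_n(t,t)=\frac{(n+1)^n}{(n+1)!}\tau^nA^{n+1}$ (e.g.\ $\ga_1=0$, $\ga_2=\frac{\tau^2}{2}A^2$). For $A\equiv-mI$ this gives $\|\ga_n\|=\big(\frac{(n+1)^n}{(n+1)!}-1\big)(m\tau)^n$, which grows like $(em\tau)^n$. By contrast, $q^{n+1}/n$ decays like $(m\tau/(1-m\tau))^n$, and $e(1-m\tau)>1$. The identity $\sum_n q^{n+1}/n=S_1$ is only a resummation of the paper's double series $\sum_n(m\tau)^n\sum_{k<n}\frac{2^k-1}{k}$; it says nothing about individual terms.

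Worse, the summed target fails as well. In the same example the special solution is $e^{-Tt/\tau}I$ with $T\in(0,1)$, $Te^{-T}=m\tau$, so $\Ga=\big(e^{T}-\frac{1}{1-m\tau}\big)I$. At $m\tau=0.35$ (admissible, since $0.35e<1$) this gives $\|\Ga\|\approx0.51>S_1\approx0.42$. So no bookkeeping delivers your first step on the whole range $me\tau<1$.

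Following the paper would not repair this. Its Step~2 bound $\|M_n(t,s)\|\le m^{n+1}(t-s+\tau)^n$ already fails for $n=2$, $s=t$, since $M_2(t,t)=\frac32\tau^2A^3$. Any uniform bound on $\|\Ga(t)\|$ must be at least $e^{T}-\frac{1}{1-m\tau}$. Along this route, either the constant $S_1$ in (h3) or the admissible range of $m\tau$ has to change.

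Your comparison step does not go through as stated either. A norm bound on $\Ga$ gives $|A\Ga x|\le\hat A|\Ga x|$ componentwise, but not $|\Ga x|\le S_1|x|$. So the splitting with $|E_1x|\le S_1\hat A|x|$ is not available.

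Even granting an entrywise bound $|E|\le S_1\hat A$, a Wa\.zewski argument needs a quasimonotone majorant. That majorant would be $A(I+\tau A)^{-1}$ with its off-diagonal entries replaced by their absolute values, plus $S_1\hat A$. GES of $C+P$ does not imply GES of $C+E$ for $|E|\le P$. For instance, take $C=\begin{pmatrix}-1&-1\\-0.5&-1\end{pmatrix}$ and $P=\begin{pmatrix}0&0.5\\0.3&0\end{pmatrix}$: then $C+P$ is Hurwitz, while $C-P$ has negative determinant.

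The paper passes over this point just as quickly: it writes $-S_1\hat A+O(\vare)\le A\Ga\le S_1\hat A+O(\vare)$ and invokes (h3). A rigorous version needs extra structure, e.g.\ a Metzler matrix as in Example~\ref{exmp3.1}, or a logarithmic-norm hypothesis.
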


\begin{proof}As already observed in Lemma \ref{lem3.2}, the matrix $(I+\tau A(t))^{-1}$ is well defined. 


Fix an arbritary small $\vare \in (0,1) $, to be chosen later.
From (h2), there exists $t_2$ such that  $\max_{s\in[t-\tau,t]} \|A(t)-A(s)\|< \vare$ for $t\ge t_2$.
 Let $T_0=\max\{t_1,t_2\}+2\tau$. 

{\it Step 1}. For  $t\ge T_0, n\in\N$ and the matrices $M_n(t,t)$ as  in \eqref{M_n}, 
we first observe that
\begin{equation}\label{ind1}(-1)^n M_{n}(t,t)=\tau^{n}A^{n+1}(t)+A(t)\ga_n(t),\end{equation}
where 
\begin{equation}\label{ind2}
\begin{split}
&\ga_1(t)=\int_{t-\tau}^t(A(u)-A(t))\, du\\
&\ga_{n+1}(t)=\tau A(t) \ga_n(t)+\int_{t-\tau}^t (-1)^n[M_n(t,u)-M_n(t,t)]\, du,\q n\in\N.
\end{split}
\end{equation}
In fact, 
$$-M_1(t,t)=A(t)\int_{t-\tau}^t [A(t)+A(u)-A(t)]\, du=\tau A^2(t)+A(t)\ga_1(t),$$
for $\ga_1(t)$ as in \eqref{ind2}.
Assuming by induction that the  formula in  \eqref{ind1} is valid for some $n\in\N$, then 
\[ \begin{split}
(-1)^{n+1} M_{n+1}(t,t)&= A(t)\int_{t-\tau}^t  (-1)^n[M_n(t,t)+M_n(t,u)-M_n(t,t)]\, du\\
&=\tau A(t)[\tau^{n}A^{n+1}(t)+A(t)\ga_n(t)] + A(t)\int_{t-\tau}^t (-1)^n[M_n(t,u)-M_n(t,t)]\, du
\\
&=\tau^{n+1}A^{n+2}(t)+A(t)\ga_{n+1}(t)
\end{split}\]
for $\ga_{n+1}(t)=\tau A(t)  \ga_n(t)+  \int_{t-\tau}^t (-1)^n[M_n(t,u)-M_n(t,t)]\, du$. This proves \eqref{ind2}.

\med
{\it Step 2}. 
For $t\ge T_0$, $s\in [t-\tau,t]$   and $n\in \N$, we claim that
\begin{equation}\label{ind}
\begin{split}
&\|M_n(t,s)\|\le m^{n+1}(t-s+\tau)^n,\\
& \|M_n(t,t)-M_n(t,s)\|\le \vare m^n(t-s+\tau)^n+ m^{n+1}(t-s+\tau)^{n-1} (t-s).\\
\end{split}
\end{equation}

In fact, for $n=1$ we have:
\begin{equation}\label{ind_n=1}
\begin{split}
\|M_1(t,s)\|&\le \|A(s)\| \int_{s-\tau}^t\|A(u)\|\, du\le m^2(t-s+\tau)\q {\rm and}\\
 \|M_1(t,t)-M_1(t,s)\|&\le \left \| (A(t)-A(s))\int_{s-\tau}^tA(u)\, du\right\|+\left \|A(t) \int_{s-\tau}^{t-\tau}A(u)\, du\right\|\\
&\le \vare m(t-s+\tau)+ m^{2} (t-s).\\
\end{split}
\end{equation}
By induction, suppose now that \eqref{ind} is valid for some $n\in\N$.
Clearly we obtain 
$\|M_{n+1}(t,s)\|\le m\sup_{[t-2\tau,t]}\|M_n(t,s)\| (t-s+\tau)\le m^{n+2}(t-s+\tau)^{m+1}$ and
 \[ \begin{split}
 \|M_{n+1}(t,t)-M_{n+1}(t,s)\|&\le
 \left \| (A(t)-A(s))\int_{s-\tau}^tM_n(t,u)\, du\right\|+\left \|A(t) \int_{s-\tau}^{t-\tau}M_n(t,u)\, du\right\|\\
 &\le \vare m^{n+1}(t-s+\tau)^{n+1}+  m^{n+2}(t-s+\tau)^n (t-s).\\
 \end{split}\]

\med
{\it Step 3}.  For $t\ge T_0$ and $n\in\N$, we have
\begin{equation}\label{ind3}
\begin{split}
\|\ga_1(t)\|&\le \vare \tau\\
\|\ga_{n+1}(t)\|&\le m\tau \|\ga_n(t)\|+ \int_{t-\tau}^t \|M_n(t,u)-M_n(t,t)\|\, du\\
& \le m\tau \|\ga_n(t)\|+\vare m^{n} \int_{t-\tau}^t (t-u+\tau)^{n}\, du+m^{n+1}\tau \int_{t-\tau}^t (t-u+\tau)^{n-1}\, du\\
& =  m\tau \|\ga_n(t)\|+\vare \tau(m\tau)^{n} \Big(\frac{2^{n+1}-1}{n+1}\Big)+ (m\tau)^{n+1}\Big(\frac{2^n-1}{n}\Big).\\
\end{split}
\end{equation}
In particular,  by induction it is easily seen that for $n\ge 2$,
\begin{equation}\label{ga_n}\|\ga_{n}(t)\|\le \vare \tau  (m\tau)^{n-1} \sum_{k=1}^n\frac{2^k-1}{k}+(m\tau)^{n} \sum_{k=1}^{n-1}\frac{2^k-1}{k}.\end{equation}


Observe that $\sum_{k=1}^{n-1}\frac{2^k-1}{k}\le \sum_{k=1}^{n-1} 2^k=2(2^{n-1}-1)\le 2^n$. Since $2m\tau<2/e<1$,  the series
$\sum_{n=1}^\infty  (m\tau)^{n-1} \sum_{k=1}^n\frac{2^k-1}{k}$ and $\sum_{n=1}^\infty(m\tau)^{n} \sum_{k=1}^{n-1}\frac{2^k-1}{k}$ converge. Using standard Taylor series and logarithm properties, we obtain the sums
\[
\begin{split}
S_0:&=\sum_{n=1}^\infty  (m\tau)^{n-1} \Big(\sum_{k=1}^n\frac{2^k-1}{k}\Big)=\frac{1}{m\tau(1-m\tau)}\log\left(\frac{1-m\tau}{1-2m\tau}\right),\\
S_1:&=\sum_{n=1}^\infty  (m\tau)^n \Big(\sum_{k=1}^{n-1}\frac{2^k-1}{k}\Big)=\frac{m\tau}{1-m\tau}\log\left(\frac{1-m\tau}{1-2m\tau}\right).
\end{split}
\]
For $\Ga(t)=\sum_{n=1}^\infty (-1)^n\ga_n(t)$ and $t\ge T_0$, we have
$$\|\Ga(t)\|\le \sum_{n=1}^\infty \|\ga_{n}(t)\|\le  \vare \tau S_0+S_1.$$
For $M(t)=\sum_{n=0}^\infty M_n(t,t)$ as in \eqref{M(t)}, from Lemma \ref{lem3.2} this leads to
$$M(t)=A(t)\big(I+\tau A(t)\big)^{-1}+A(t)\Ga(t),$$
with $-S_1\hat A(t)+O(\vare)\le A(t)\Ga(t)\le S_1\hat A(t)+O(\vare)$.
From (h3),
system $x'(t)=\Big [A(t)(I+\tau A(t))^{-1}+S_1\hat A(t)\Big]x(t)$ is GES, and so it is any ODE system $x'(t)=\Big [A(t)(I+\tau A(t)]^{-1}+(S_1+ \de)\hat A(t)\Big]x(t)$ for $\de>0$ sufficiently small. For $\vare>0$ sufficiently small, we have  that the perturbed ODE system $x'(t)=\Big[A(t)(I+\tau A(t))^{-1}+(\vare \tau S_0+S_1)\hat A(t)\Big]x(t)$ is GES as well.
 From Lemma \ref{lem3.1}, we conclude that system \eqref{f2-4} is asymptotically stable, that is, the zero solution of \eqref{f2-4} is stable and a global attractor.
\end{proof}

\begin{rmk}\label{rmk3.4} {\rm Since $m\tau<1/e$, we always have the uniform bound $S_1<\frac{1}{e-1}\log \big(\frac{e-1}{e-2}\big)\approx 0.51$; nevertheless,  we stress that $S_1=S_1(\tau)$ in (h3) is such that   $S_1(\tau)\to 0$ as $\tau\to 0$. Note also that  the estimates in Steps 2 and  3 are not optimal, but they  are  easier to deal with than sharper ones. For instance,  we could have used integration by parts in \eqref{ind3},  to compute $\int_{t-\tau}^t (t-u+\tau)^{n-1}(t-u)\, du= \tau^{n}\big[\frac{2^n}{n}+\tau \frac{1-2^n}{n(n+1)}\big]$, which however is not very helpful  to derive a recursive formula as in \eqref{ga_n}.
On the other hand, under some additional constraints, one can obtain better estimates for the norm of the errors $\ga_n(t)$, and therefore also for the sum $S_1$, or even show that $\|\ga_n(t)\|={\rm O}(\vare)$, for any $\vare>0$ fixed.
}
\end{rmk}

In particular, the above results can be applied to the linear approximation \eqref{f2-4} of the  arms race  model. The underlying
threshold condition $m\tau < 1/e$ (to derive a possible stability criterion) can be interpreted as follows: if the product of the response intensity and the delay time exceeds the threshold  $1/e \approx 0.37$, the system risks getting out of control. With large delays, even a moderate response can lead to oscillations and an uncontrolled increase in tension.

As an illustration,  in the next example the results above are applied to the confrontation  model \eqref{f2-4} with $A(t)$ a triangular matrix.

\begin{exmp}\label{exmp3.1} {\rm
Fix e.g.  the maximum norm in $\R^2$.  Let
$A(t)=\left [\begin{matrix}-a(t)&k(t)\\0&-b(t)\\ \end{matrix}\right]$
with $a(t),b(t), k(t)$  positive and continuous, and suppose that
assumptions (h1),(h2) in Theorem \ref{thm4.1} are satisfied, where now $l(t)\equiv 0$. 
In particular, we require that
$$\tau \sup_{t\ge t_1}\|A(t)\|=\tau \sup_{t\ge t_1}\max\{a(t)+k(t),b(t)\}<1/e.$$

%
%
For the present situation, simple computations lead to
\[
A(t)[I+\tau A(t)]^{-1}=\left [\begin{matrix} -\frac{a(t)}{1-\tau a(t)}&\frac{k(t)}{(1-\tau a(t))(1-\tau b(t))}\\ 0&-\frac{b(t)}{1-\tau b(t)}\end{matrix}\right],
\]

 Now, consider the notations as well as the  computations in the proof of Theorem \ref{thm4.1}. For $\vare>0$ small and $t>0$ large, these estimates yield
$$M(t)=
\left [\begin{matrix} -\frac{a(t)}{1-\tau a(t)}&\frac{k(t)}{(1-\tau a(t))(1-\tau b(t))}\\ 0&-\frac{b(t)}{1-\tau b(t)}\end{matrix}\right]+A(t)\sum_{n=1}^\infty (-1)^n \ga_n(t),
$$
with $\sum_{n=1}^\infty\| \ga_n(t)\|\le S_1\le \frac{m\tau}{1-m\tau}\log\left(\frac{1-m\tau}{1-2m\tau}\right)$ for $m=\sup_{t\ge t_1}\|A(t)\|$.
Consider the matrix 
$$M^*(t)=\left [\begin{matrix} -a(t)(\frac{1}{1-\tau a(t)}-S_1)&k(t)\Big(\frac{1}{(1-\tau a(t))(1-\tau b(t))}+S_1\Big)\\ 0&-b(t)(\frac{1}{1-\tau b(t)}-S_1)\end{matrix}\right].$$
If
\[
\begin{split}
&\liminf_{t\to\infty} b(t)\big(\frac{1}{1-\tau b(t)}-S_1\big)>0\\
 &\liminf_{t\to\infty}\left(\frac{a(t)}{1-\tau a(t)}-\frac{k(t)}{(1-\tau a(t))(1-\tau b(t))}-S_1(a(t)+k(t))\right)>0,
\end{split}
\]
then  $\limsup_{t\to\infty}M^*(t){\bf 1}<0$, for the matrix vector ${\bf 1}=[1\ 1]^T$. From a criterion in \cite[Proposition 6.3]{Coppel}, it follows that $x'=M^*(t)x$ is GES, and so it is $x'=[A(t)(I+\tau A(t))^{-1}+S_1\hat A(t)]x$. From Theorem \ref{thm4.1}, system  \eqref{f2-4} is asymptotically stable.

For instance, let $a(t)=b(t)\equiv a>0$. In this case, for both the norms $|\cdot|_1$ and $|\cdot|_\infty$ in $\R^2$, $\|A(t)\|=a+k(t)$.  Suppose that, for some $t_1\in\R$,
$$\tau \sup_{t\ge t_1}k(t)<1/e-\tau a,$$
and that $\kappa(t):=\max\{|k(s)-k(t)|: s\in [t-\tau,t]\}=o(1)$ as $t\to\infty$.
Hence, hypotheses (h1) and (h2) hold. On the other hand,   
$ A(t)[I+\tau A(t)]^{-1}= \left [\begin{matrix} -\frac{a}{1-\tau a}&\frac{k(t)}{(1-\tau a)^2}\Big)\\ 0&-\frac{a}{1-\tau a}\end{matrix}\right].$  Since $a(t),b(t)$ are constant, one easily checks by induction that the matrices $A(t)\ga_n(t)$ have de form $A(t)\ga_n(t)= \left [\begin{matrix} 0&k(t)\ga_{12}^{(n)}(t)\\ 0&0\end{matrix}\right]$, so $A(t)\Ga(t)= \left [\begin{matrix} 0&(-1)^nk(t)\Ga_{12}(t)\\ 0&0\end{matrix}\right]$, with $\|k(t)\Ga_{12}(t)\|\le  S_1|k(t)|$.  
In conclusion, if in addition  $$\sup_{t\ge t_2} k(t)<a(1-\tau a)[1+S_1(1-\tau a)^2]^{-1}$$ for some $t_2$, then  it follows that $\limsup_{t\to\infty} [A(t)(I+\tau A(t))^{-1}+A(t)\Ga(t)]{\bf 1}<0$, 
and Lemma \ref{lem3.2} implies that  system 
$x'(t)=\left [\begin{matrix}-a&k(t)\\0&-a\\ \end{matrix}\right]x(t-\tau)$ is asymptotically stable. 
Taking the Euclidean norm in $\R^2$, similar conclusions can be drawn
using Proposition \ref{prop3.1}.
}
\end{exmp}

\section{Concluding Remarks}
	\setcounter{equation}{0}

In this article, we propose an original  delayed Richardson model for an arms race. With the introduction of a delay $\tau>0$, representing a reaction time for armament and in the hostility factors, the model becomes more realistic.  A time-delay had already been considered by Hill \cite{HILL}, but only  for the  linear autonomous arms race system \eqref{Hill}, a simplified version of the model \eqref{f1-1} proposed and studied here. Even for this very particular case, we  carefully analysed its characteristic equation and provided a correction on the value of the critical threshold $\tau_{-}$ computed in  \cite{HILL}, which assures  stability if and only if $\tau<\tau_-$. In summary, for the autonomous model, we have derived sharp conditions for the asymptotic stability of the equilibrium and for  the occurrence of  Hopf bifurcations, as the delay increases and crosses some critical values $\tau_n^{\pm}$, with the direction and stability of the bifurcating periodic orbits completely described under very natural conditions. For the non-autonomous model, several criteria for stability, asymptotic stability and exponential asymptotic stability were given, by using several different techniques. In addition, a general result on the asymptotic stability for  $p$-dimensional DDEs of the form $x'(t)=A(t)x(t-\tau)$ was established by using  the theory of the so-called {\it special solutions}, which in turn,  as a particular case of such DDEs with $p=2$, can be applied to the linear approximation  for \eqref{f1-1} at an equilibrium point.

\med

Below,  a possible political and strategic interpretation of the results  concerning the Hopf bifurcations is presented.

\med

1. Formula for $K_{1}$ and ``the inevitability of destabilization": We have proven that $K_{1}>0$ always.

Interpretation: As soon as the decision-making time-delay  $(\tau)$ exceeds the critical threshold $\tau_{-}$, the equilibrium (``peace") is guaranteed to lose stability. No amount of good intentions on the part of the parties can prevent this if institutional inertia (bureaucracy, intelligence, logistics) is too great.

2. Formula for $K_{2}$ and ``the anatomy of a global crisis": This is a key point of our work. The sign of $K_{2}$ determines what will happen to the countries in conflict after the stability threshold is crossed. The following are possible:

-- "Soft" crisis scenario ($K_{2}<0$, supercritical bifurcation): This creates stable cycles.

Interpretation: This is the ``chronic arms race" model. The system does not explode into infinity (toward total war), but it also does not return to peace. Countries are locked in a cyclical process: increasing power $\rightarrow$ economic peak $\rightarrow$ fatigue/detente $\rightarrow$ new fear $\rightarrow$ new cycle.

The world becomes predictably unstable. This is  a ``controlled chaos," where the arms race becomes a constant backdrop to existence.

-- ``Hard" crisis scenario ($K_2>0$, subcritical bifurcation): This creates unstable periodic orbits.

Interpretation: This is the `powder keg" model. The equilibrium point becomes extremely vulnerable. Any accidental threshold exceedance (technical failure, provocation) instantly throws the system out of control. Decisions go ``off the rails"   towards exponential escalation.

This is a harbinger of a catastrophic collision. Here, the delayed decisions $(\tau)$ act as a detonator.

3. The role of nonlinear aggression $(g_{ij},h_{ij})$: Under (H2), the formula for $K_{2}$ shows that the outcome depends on the cubic nonlinearity of hostility.

Specifically, the fate of the world depends on how the level of ``hatred" (propaganda) changes with large quantities of weapons.
If, upon reaching huge arsenals, society begins to get tired from hatred (the growth of hostility slows), then $K_{2}$
will likely be negative, and a stable cycle will be achieved. If propaganda operates on the principle of ``the more missiles, the more hatred" (accelerating growth of aggression), then $K_{2}$
becomes positive, and the system is doomed to an explosive scenario.

4. Multiple bifurcation points $(\tau_{n}^{\pm})$: The fact that Theorems \ref{thm2.1} and \ref{thm2.2} apply to any $\tau_{n}^{\pm}$ indicates that the system has multiple ``lines of defense" (or ``lines of fall").

As the delay $\tau$ increases, the system passes through a series of turbulent zones. Even if countries survive the first crisis $\tau_{0}^{-}$, they may face the next ($\tau_{0}^{+}$ or $\tau_{1}^{-}$), where the oscillation structure becomes even more complex.

\med

As lines of future research on this arms race model, some questions to be explored follow:

--- How does the ``propaganda ceiling" ($\overline{b}$) affect the ability to stabilize an arms race? If propaganda is too strong, can it overcome economic constraints ($a,b$)?

It is of interest to follow the mathematical conditions linking the parameters of the model $(k,a,l,b,\tau)$. 

--- How do the threat-cost ratios $(k/a)$ and $(l/b)$ limit the permissible delay time $(\tau)$?

--- Is the hypothesis true that the more aggressive countries are $(k, l)$, the less they have the right to``think slowly" (i.e., $\tau$
must be very small to avoid catastrophe)?

 \section*{Acknowledgement} 
 The work of T. Faria  was  financed by Portuguese funds through FCT (Funda\c c\~ao para a Ci\^encia e a Tecnologia), within the Project UID/04561/2025 - https://doi.org/10.54499/UID/04561/2025.

%


	\end{document}